\newtheorem{prethm}{{\bf Theorem}}
\newenvironment{thm}{\begin{prethm}{\hspace{-0.5
               em}{\bf.}}}{\end{prethm}}
\newtheorem{prepro}[prethm]{{\bf Theorem}}
\newtheorem{preprop}[prethm]{{\bf Proposition}}
\newenvironment{prop}{\begin{preprop}{\hspace{-0.5
               em}{\bf.}}}{\end{preprop}}
\newtheorem{precor}[prethm]{{\bf Corollary}}
\newenvironment{cor}{\begin{precor}{\hspace{-0.5
               em}{\bf.}}}{\end{precor}}
\newtheorem{predefn}[prethm]{{\bf Definition}}
\newtheorem{preconj}[prethm]{{\bf Conjecture}}
\newtheorem{preremark}[prethm]{{\bf Remark}}
\newenvironment{remark}{\begin{preremark}\rm{\hspace{-0.5
               em}{\bf.}}}{\end{preremark}}
\newtheorem{preexample}[prethm]{{\bf Example}}
\newtheorem{prelem}[prethm]{{\bf Lemma}}
\newenvironment{lem}{\begin{prelem}{\hspace{-0.5
               em}{\bf.}}}{\end{prelem}}
\newtheorem{prelam}{{\bf Lemma}}
\newtheorem{preproof}{{\bf Proof.}}
\newenvironment{proof}[1]{\begin{preproof}{\rm
               #1}\hfill{$\Box$}}{\end{preproof}}
\title{\bf \large  THE  COLORING OF THE REGULAR GRAPH OF IDEALS
\thanks
{{\it Key Words}: Regular digraph,  Clique
number, Chromatic number, Artinian, Reduced ring.}
\thanks
{2010 {\it Mathematics Subject Classification}: 05C15, 05C25, 13B30, 16P20.}}
\author{{\normalsize {Farzad Shaveisi}
}\vspace{3mm}\\
{\footnotesize{\it Department of Mathematics, Faculty of Sciences, Razi University, Kermanshah, Iran }}\\
{\footnotesize{ $\mathsf{f.shaveisi@razi.ac.ir}$}}}
\date{}
\begin{document}
\maketitle
\vspace{-.4cm}
\noindent{\small{\sc Abstract}. The regular graph of ideals of the commutative ring $R$, denoted by $\Gamma_{reg}(R)$, is a graph whose vertex
set is the set of all non-trivial ideals of $R$ and two distinct vertices $I$ and $J$ are adjacent if and only if either $I$ contains a $J$-regular element or $J$ contains an $I$-regular element. In this paper, it is shown that for every Artinian ring $R$,  the  edge chromatic number of $\Gamma_{reg}(R)$ equals its maximum degree. Then a formula for the clique number of $\Gamma_{reg}(R)$ is given. Also, it is proved that
for every reduced ring $R$ with $n(\geq3)$ minimal prime ideals, the edge chromatic number of $\Gamma_{reg}(R)$ is $2^{n-1}-2$.  Moreover, we show that both of the clique number and vertex chromatic number of $\Gamma_{reg}(R)$ are $n-1$, for every reduced ring $R$ with $n$ minimal prime ideals.


\vspace{5mm} \noindent{\bf\large 1. Introduction}\\ 

We begin with recalling some  notations on graphs.   Let $\Gamma$ be a digraph.
We denote the vertex set of $\Gamma$, by $V(\Gamma)$. Also, we distinguish the {\it{out-degree}} $d_\Gamma^+(v)$, the number of arcs leaving a vertex $v$, and the {\it{in-degree}} $d_\Gamma^-(v)$, the number of arcs entering a vertex $v$. If the graph is oriented, the degree $d_\Gamma(v)$ of a vertex $v$ is equal to the sum of its out- and in-degrees.
Let $G$ be a simple graph with the vertex set $V(G)$ and $A\subseteq V(G)$.  We denote by
$G[A]$ the subgraph of $G$ induced by $A$.
If $|V(G)|=\mu$, for some cardinal number $\mu$, then the complete graph and its complement are denoted by $K_\mu$ and $\overline{K_\mu}$, respectively. The degree of a vertex $x$ of $G$ is denoted by $d(x)$ and the maximum degree of vertices of $G$ is denoted by $\Delta(G)$.
A complete bipartite graph with parts of sizes $\mu$ and $\nu$ is denoted by $K_{\mu,\nu}$. Moreover, if either $\mu=1$ or $\nu=1$, then the complete
bipartite graph is said to be a \textit{star graph}.
Let $G_1$ and $G_2$ be two arbitrary
graphs. By $G_1+G_2$ and  $G_1\vee G_2$, we mean the {\it disjoint union}
of $G_1$ and $G_2$ and {\it join} of two graphs $G_1$ and $G_2$,
respectively.
For a
graph $G$, the \textit{clique number} of $G$,
and  the {\textit{vertex} ({\it edge}) {\it chromatic number} of $G$ are denoted by $\omega(G)$, and $\chi(G)$ ($\chi'(G)$), respectively. For more details about the used
terminology of graphs, see \cite{west}.

Throughout this
paper, $R$ is assumed to be a non-domain commutative ring with identity.
 An element $r\in R$ is called
\textit{$R$-regular} if $r\notin Z(R)$, where $Z(R)$ denotes the set of all zero-divisors of $R$.
By $\mathbb{I}(R)$ ($\mathbb{I}(R)^*$), $Max(R)$ and $Min(R)$ we
denote the set of all proper (non-trivial) ideals of $R$, the set of all maximal ideals of $R$ and the set of all minimal prime ideals of $R$, respectively. The ring $R$
is said to be \textit{reduced}, if it has no non-zero nilpotent
element. For every ideal $I$ of $R$, the {\it annihilator of $I$} is denoted by $Ann(I)$. A subset $S$ of a commutative
ring $R$ is called a {\it{multiplicative closed subset {\rm (m.c.s)}}} of
$R$ if $1\in S$ and $x,y\in S$ implies that $xy\in S$. If $S$ is an
m.c.s of $R$ and $M$ is an $R$-module, then we denote by $R_S$ and
$M_S$, the ring of fractions of $R$ and the module of fractions
 of $M$ with respect to $S$, respectively. If ${\mathfrak{p}}$ is a prime ideal of $R$ and $S=R\setminus {\mathfrak{p}}$, we use the notation $M_{\mathfrak{p}}$, for the localization of $M$
  at ${\mathfrak{p}}$. By $T(R)$, we mean the {\it{total ring}} of $R$ that is the ring of fractions, where $S=R\setminus Z(R)$.

As we know, most properties of a ring are closely tied to the
behavior of its ideals, so it is useful to study graphs or
digraphs, associated to the ideals of a ring or associated to modules. To see an instance of these
graphs, the reader is referred to \cite{disc.math,rocky,atani,MBI,intersection,actamathhungar,redmond,Safaeeyan,F.Sarei}.
The
{\it regular digraph of ideals} of a ring $R$, denoted by
$\overrightarrow{\Gamma_{reg}}(R)$, is a digraph whose vertex set
is the set of all non-trivial ideals of $R$ and for every two
distinct vertices $I$ and $J$, there is an arc from $I$ to  $J$ if
and only if $I$ contains a $J$-regular element. The
underlying graph of $\overrightarrow{\Gamma_{reg}}(R)$ is denoted by $\Gamma_{reg}(R)$. The regular digraph (graph) of ideals, first was introduced by Nikmehr and Shaveisi in \cite{actamathhungar}.
Then in \cite{khashayarmanesh}, Afkhami, Karimi and Khashayarmanesh followed the study of this graph. In this paper, the coloring of the regular graph of ideals is studied. In Section 2, it is shown that $\chi'(\Gamma_{reg}(R))=\Delta(\Gamma_{reg}(R))$, where $R$ is an Artinian ring. In Section 3, it is shown that $\chi(\Gamma_{reg}(R))=\omega(\Gamma_{reg}(R))=2|{\rm Max}(R)|-f(R)-1$, where $R$ is an Artinian ring and $f(R)$ denotes the number of fields, appeared in the decomposition of $R$ to direct product of local rings. Section 4 is devoted to the case that $R$ is a reduced ring.
For example, for every reduced ring $R$ with $|{\rm Min}(R)|=n\geq 3$, we obtain that $\chi(\Gamma_{reg}(R))=\omega(\Gamma_{reg}(R))=n-1$
and $\chi'(\Gamma_{reg}(R))=2^{n-1}-2$.

\vspace{7mm} \noindent{\bf\large 2. The Edge Chromatic Number
}\\\\ 
In this section, we study the edge coloring of the regular graph of ideals of an Artinian ring. Before this, we need the following lemma from \cite{beineke}.
\begin{lem}\label{firstkindgraph}{\rm\cite[Corollary 5.4]{beineke}}
Let $G$ be a simple graph. Suppose that for every
vertex $u$ of maximum degree, there exists an edge $\{u,v\}$ such that $\Delta(G)-d(v)+2$ is
more than the number of vertices with maximum degree in $G$. Then $\chi'(G)=\Delta(G)$.
\end{lem}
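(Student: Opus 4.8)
The plan is to start from Vizing's theorem, which gives $\chi'(G)\le\Delta(G)+1$, so that it suffices to exclude $\chi'(G)=\Delta(G)+1$. Writing $\Delta:=\Delta(G)$ and letting $t$ denote the number of vertices of maximum degree, the hypothesis $\Delta-d(v)+2>t$ unwinds to: every major vertex $u$ has a neighbour $v$ with $d(v)\le\Delta+1-t$. I would argue by contradiction, assuming $G$ is of class two, and pass to a $\Delta$-critical subgraph $H$ (minimal under edge deletion with $\chi'(H)=\Delta+1$). Since $\Delta+1=\chi'(H)\le\Delta(H)+1$ while $\Delta(H)\le\Delta$, we get $\Delta(H)=\Delta$, and every vertex that is major in $H$ is automatically major in $G$, so $H$ has at most $t$ major vertices.

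The engine of the argument is Vizing's Adjacency Lemma: in a $\Delta$-critical graph the endpoint $u$ of any edge $\{u,v\}$ is adjacent to at least $\Delta-d_H(v)+1$ vertices of degree $\Delta$. The key computation is then immediate: if a vertex $u$ that is major in $H$ has an $H$-neighbour $v$ with $d_H(v)\le\Delta+1-t$, then $u$ is adjacent to at least $\Delta-(\Delta+1-t)+1=t$ major vertices; counting $u$ itself exhibits at least $t+1$ major vertices, contradicting the fact that there are exactly $t$. I would also record the companion estimate obtained by applying the Adjacency Lemma at a minimum-degree vertex of $H$ and its neighbour, namely $\delta(H)\ge\Delta-t+1$, so that every vertex surviving into $H$ already has large degree.

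Combining these, take a vertex $u$ that is major in $H$ and invoke the hypothesis for $u$ (as a major vertex of $G$) to obtain a neighbour $v$ with $d(v)\le\Delta+1-t$. If $v$ lies in $H$, then $d_H(v)\le d(v)\le\Delta+1-t$ together with $\delta(H)\ge\Delta-t+1$ forces $d_H(v)=d(v)=\Delta+1-t$, whence every $G$-edge at $v$ already belongs to $H$; in particular $\{u,v\}\in E(H)$, and the key computation above delivers the contradiction.

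The main obstacle is precisely the remaining possibility that the low-degree neighbour $v$ promised by the hypothesis does not survive into the critical subgraph $H$, so that the edge $\{u,v\}$ is lost. Reconciling the degree data of $G$ with that of $H$ — showing either that a minimum-degree vertex of $H$ is itself small enough to trigger the Adjacency-Lemma count, or that $H$ may be chosen to retain such an edge — is where I expect the real work to lie, rather than in the Adjacency-Lemma arithmetic. Should this $G$-to-$H$ transfer prove delicate, I would fall back on a direct Vizing-fan recoloring performed inside $G$: delete the distinguished edge $\{u,v\}$, colour the remainder with $\Delta$ colours inductively, and reinsert $\{u,v\}$ by a Kempe-chain interchange, where the $\Delta-d(v)+1\ (\ge t)$ colours free at $v$ outnumber the $t$ saturated major vertices that could block a chain, so that a completing interchange exists by pigeonhole.
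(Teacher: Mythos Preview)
The paper does not prove this lemma at all; it is quoted verbatim from \cite[Corollary~5.4]{beineke} and used as a black box, so there is no in-paper argument to compare against.

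That said, your critical-subgraph-plus-Vizing-Adjacency-Lemma strategy is the standard one, and the ``main obstacle'' you flag is in fact illusory. You chose $u$ to be major in $H$, so $d_H(u)=\Delta(H)=\Delta(G)=\Delta$; since $H\subseteq G$ and $d_G(u)\le\Delta$, this forces $d_H(u)=d_G(u)$, hence $N_H(u)=N_G(u)$. In particular the low-degree $G$-neighbour $v$ promised by the hypothesis automatically lies in $H$ and the edge $\{u,v\}$ survives into $H$. Your Case~1 argument then applies directly: $d_H(v)\le d_G(v)\le\Delta+1-t$, so by the Adjacency Lemma $u$ has at least $t$ major $H$-neighbours, and together with $u$ itself this gives $t+1$ major vertices in $H$, contradicting $t_H\le t$. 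No fallback fan-recolouring is needed, and the companion estimate $\delta(H)\ge\Delta-t+1$ is not even required.
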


%

\begin{remark}\label{remark}
Let $R_1,\ldots,R_n$ be rings, $R\cong R_1\times \cdots\times R_n$ and $I=I_1\times \cdots\times I_n$ and $J=J_1\times \cdots\times J_n$ be
two distinct vertices of ${\Gamma_{reg}}(R)$. Then

{\rm(i)}
 $I$ contains a $J$-regular element if and only if for every $i$, either $I_i$ contains a $J_i$-regular element or $J_i=(0)$.

{\rm(ii)}
Assume that  every $R_i$ is an Artinian local ring. Then (i) and  {\rm\cite[Theorem 2.1]{actamathhungar}} imply that if $I$ contains a $J$-regular element, then $J$ contains no $I$-regular element.
\end{remark}


\begin{thm}\label{edgechromaticnumber}
 If $R$ is an Artinian ring, then $\chi'(\Gamma_{reg}(R))=\Delta(\Gamma_{reg}(R))$.
\end{thm}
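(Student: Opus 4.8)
The plan is to verify the hypothesis of Lemma~\ref{firstkindgraph} (Beineke's criterion), which then gives $\chi'(\Gamma_{reg}(R))=\Delta(\Gamma_{reg}(R))$ at once. By the structure theorem for Artinian rings I first write $R\cong R_1\times\cdots\times R_n$ with each $R_i$ an Artinian local ring, say with maximal ideal $\mathfrak{m}_i$, and I dispose of the degenerate case $n=1$ separately: when $R$ is local every non-trivial ideal lies in $\mathfrak{m}_1$ and so contains no regular element, whence $\Gamma_{reg}(R)$ is edgeless and $\chi'=0=\Delta$. So from now on I assume $n\ge 2$ and aim to check the numerical condition of Lemma~\ref{firstkindgraph}.

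The next step is to make the adjacency completely explicit. Writing a vertex as $I=I_1\times\cdots\times I_n$ and applying Remark~\ref{remark}(i) together with the fact that, in the local ring $R_i$, an ideal $I_i$ contains a $J_i$-regular element (for $J_i\ne(0)$) precisely when $I_i=R_i$ — since the relevant regular elements are units and a proper ideal of $R_i$ contains none — the relation ``$I$ contains a $J$-regular element'' becomes a purely combinatorial condition on the coordinatewise patterns of the zero, full, and proper-nonzero entries of $I$ and $J$. Since Remark~\ref{remark}(ii) guarantees that the digraph $\overrightarrow{\Gamma_{reg}}(R)$ is oriented, the degree of each vertex splits without double counting as $d(I)=d^+(I)+d^-(I)$, and both out- and in-degree can be counted as products of the numbers $a_i$ (the number of ideals of $R_i$) taken over the coordinates where $I_i$ is full or zero, minus small corrections for the trivial ideals and for $I$ itself. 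This produces a closed formula for $d(I)$; optimizing it over all vertices yields $\Delta$ and an explicit description of the set $M$ of maximum-degree vertices, which turn out to be the extremal coordinate patterns (those with no, or almost no, proper-nonzero coordinate).

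With $\Delta$ and $t:=|M|$ identified, I then apply Lemma~\ref{firstkindgraph}: for each $u\in M$ I must produce a neighbour $v$ with $\Delta-d(v)+2>t$, that is, a neighbour of sufficiently small degree $d(v)<\Delta-t+2$. Because a maximum-degree vertex has the ``extremal'' pattern, it is adjacent to vertices that carry a proper-nonzero coordinate (or an extra zero/full coordinate), and such vertices have substantially smaller degree by the formula above; choosing one of these as $v$ and checking the displayed inequality finishes the verification and lets Lemma~\ref{firstkindgraph} conclude $\chi'=\Delta$.

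The hard part is precisely this last numerical step. The quantities $\Delta$, the size $t$ of the maximum-degree class, and the least degree available among the neighbours of a maximum-degree vertex all depend on the multiset $\{a_i\}$ and on how many of the $R_i$ are fields, so the inequality of Lemma~\ref{firstkindgraph} has to be checked uniformly across all these configurations. I expect a finite list of genuinely exceptional rings producing highly symmetric graphs — for instance even cycles, or graphs in which too large a proportion of the vertices attain $\Delta$ so that $t$ is simply too big — for which the hypothesis of Lemma~\ref{firstkindgraph} cannot hold; these must be isolated and treated by hand, invoking the classical facts that even cycles and complete graphs of even order are of class one. Organizing this case distinction, rather than the coordinatewise degree count, is where the real effort goes.
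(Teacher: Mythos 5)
Your strategy is the same as the paper's: decompose $R$ as a product of Artinian local rings, reduce adjacency to a combinatorial condition on the zero/full/proper coordinate pattern, compute $d(I)=d^+(I)+d^-(I)$ as products of the ideal counts $t_i$, identify the maximum-degree vertices, and feed a suitable low-degree neighbour into Lemma~\ref{firstkindgraph}. The paper executes exactly this in its Cases 1 and 3 (reduced with $n\geq 6$, and non-reduced with $|{\rm Max}(R)|\geq 3$), and you also correctly anticipate that small configurations will escape Lemma~\ref{firstkindgraph} and need separate treatment. However, what you have written is a plan, not a proof: the step you yourself call ``the hard part'' --- computing $\Delta$, the size $t$ of the maximum-degree class, and verifying $\Delta-d(v)+2>t$ uniformly over all multisets $\{t_i\}$ and all counts of field factors --- is entirely deferred, and this is where all the content of the paper's argument lives (its displayed inequalities such as $\prod_{k=1}^{n-1}(t_k+1)-(2s_n+1)(t_n-1)>0$ are precisely what you would have to produce).

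More seriously, your fallback for the exceptional cases is that they will be ``even cycles, or complete graphs of even order,'' hence class one. That hope fails. For the reduced case with $n=3$ the graph is indeed $C_6$ and you are fine, but for $|{\rm Max}(R)|=2$ with $R$ non-reduced the relevant components are of the form $\overline{K_{\mu}}\vee K_{1,\nu}$ (the paper's Case 2, which it handles by this explicit structural decomposition rather than by Lemma~\ref{firstkindgraph}); when $\mu=\nu=1$, e.g.\ $R\cong\mathbb{Z}_4\times\mathbb{Z}_9\cong\mathbb{Z}_{36}$, each such component is a triangle $K_3$, which is class two, so $\chi'=3>2=\Delta$ there. (The paper's own figure of $\Gamma_{reg}(\mathbb{Z}_{36})$ exhibits exactly these triangle components, so its Case 2 claim $\chi'=\mu+\nu$ is itself wrong in this subcase.) Consequently no ``by hand'' treatment of the exceptional list can close your argument as stated: you would need either to exclude this family of rings or to prove something other than $\chi'=\Delta$ for it. You should also note the degenerate case where $R$ has infinitely many ideals (then $\chi'=\Delta=\infty$), which your outline does not address.
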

\begin{proof}
{Let $R$ be an Artinian ring. Then by \cite[Theorem 8.7]{ati}, there exists a positive integer $n$ such that $R\cong R_1\times\cdots\times R_n$, where every $R_i$ is an Artinian local ring. If $R$ contains infinitely many ideals, then with no loss of generality, we can assume that $\mathbb{I}(R_1)$ is an infinite set. Since $(0)\times R_2\times\cdots\times R_n$ is adjacent to $I_1\times (0)$, for every non-zero ideal $I_1$ of $R_1$, we deduce that, $\chi'(\Gamma_{reg}(R))=\Delta(\Gamma_{reg}(R))=\infty$. Therefore, one can suppose that $|\mathbb{I}(R)^*|<\infty$. If $R$ is a local ring, then by \cite[Theorem 2.1]{actamathhungar},  $\Gamma_{reg}(R)\cong \overline{K_{|\mathbb{I}(R)^*|}}$ and hence $\chi'(\Gamma_{reg}(R))=\Delta(\Gamma_{reg}(R))=0$. For the non-local case, we continue the proof in the following three cases:

Case 1. $R$ is a reduced ring. Since $R$ is Artinian, we conclude that $R\cong F_1\times\cdots\times F_n$, where every $F_i$ is a field.
If $n\leq 5$, then it is not hard to check that $\chi'(\Gamma_{reg}(R))=\Delta(\Gamma_{reg}(R))=2^{n-1}-2$. Thus we can suppose that $n\geq 6$. Now, let $I=F_1\times\cdots\times F_r\times (0)\times\cdots\times (0)$ be a vertex of $\Gamma_{reg}(R)$, where $1\leq r\leq n-1$. Then we have:
$$d(I)=d^+(I)+d^-(I)=2^{r}-2+2^{n-r}-2=2^{r}+2^{n-r}-4.$$
Therefore, a vertex $I=I_1\times\cdots\times I_n$ of $\Gamma_{reg}(R)$ has maximum degree if and only if either there exists exactly one $j$, $1\leq j\leq n$ such that  $I_j=F_j$ or there exists exactly one $j$, $1\leq j\leq n$ such that  $I_j=(0)$. So, the number of vertices with maximum degree is $2n$. Now, let $u$ be a vertex with maximum degree. Then with no loss of generality, we can suppose that either $u=F_1\times (0)\times\cdots\times(0)$ or $u=(0)\times F_1\times\cdots\times F_n$. Suppose that $u=F_1\times (0)\times\cdots\times(0)$ and consider the vertex $v=F_1\times\cdots\times F_{[\frac{n}{2}]}\times(0)\times\cdots\times (0)$. Clearly,
$d(u)=\Delta(\Gamma_{reg}(R))=2^{n-1}-2$, $d(v)=2^{[\frac{n}{2}]}+2^{n-[\frac{n}{2}]}-4$ and $u$ is adjacent to $v$.
Since $n\geq 6$, we deduce that
$$\Delta(\Gamma_{reg}(R))-d(v)+2=2^{n-1}-2^{n-[\frac{n}{2}]}-2^{[\frac{n}{2}]}+4>2n.$$
If $u=(0)\times F_1\times\cdots\times F_n$, then  a similar proof to that of above shows that $\Delta(\Gamma_{reg}(R))-d(v)+2>2n$. Thus Lemma \ref{firstkindgraph} implies that $\chi'(\Gamma_{reg}(R))=\Delta(\Gamma_{reg}(R))$.

Case 2. $R$ is a non-reduced ring and $|{\rm Max}(R)|=2$. In this case, $R\cong R_1\times R_2$, where $R_1$ and $R_2 $ are Artinian local rings. Let $|V(\Gamma_{reg}(R_1))|=\mu$ and $|V(\Gamma_{reg}(R_2))|=\nu$,  $$A=V(\Gamma_{reg}(R_1))\times V(\Gamma_{reg}(R_2)),$$
$$B_1=V(\Gamma_{reg}(R_1))\times\{(0)\},\, B_2=\{R_1\}\times V(\Gamma_{reg}(R_2)),\, B_3=\{R_1\times (0)\},$$
$$C_1=V(\Gamma_{reg}(R_1))\times\{R_2\},\, C_2=\{(0)\}\times V(\Gamma_{reg}(R_2)),\,C_3=\{(0)\times R_2\},$$
$$B=B_1\cup B_2\cup B_3\,\, {\rm and}\,\,C=C_1\cup C_2\cup C_3.$$
Then we have: $$\Gamma_{reg}(R)\cong \Gamma_{reg}(R)[A]+\Gamma_{reg}(R)[B]+\Gamma_{reg}(R)[C]\cong \overline{K_{\mu\nu}}+ (\overline{K_{\mu}}\vee K_{1,\nu})+ (\overline{K_{\mu}}\vee K_{1,\nu}).$$
So, $\chi'(\Gamma_{reg}(R))=\mu +\nu=\Delta(\Gamma_{reg}(R))$, as desired.

Case 3. $R$ is a non-reduced ring and $|{\rm Max}(R)|=n\geq 3$. Let $I=I_1\times\cdots\times I_n$ be a non-trivial ideal of $R$ and define the following sets and numbers:
$$\Delta_I=\{k\,|\,1\leq k\leq n\,\, {\rm and} \,\,I_k=R_k\};$$
$$\Upsilon_I=\{k\,|\,1\leq k\leq n\,\, {\rm and} \,\,I_k=(0)\};$$
$$\Lambda_I=\{k\,|\,1\leq k\leq n,\,\, {\rm and}\,I_k \,{\rm is \,a\,\,non-trivial\,ideal\,of} R_k\};$$
$$t_i=|\mathbb{I}(R_i)|;\ (1\leq i\leq n);$$
$$T_i=\{j|\ 1\leq j\leq n\ {\rm and} \ |\mathbb{I}(R_j)|=t_i\};\ s_i=|T_i|\ (1\leq i\leq n).$$
With no loss of generality, we can assume that $t_1\geq\cdots\geq t_n$. Now, let us compute the degree of every vertex of $\Gamma_{reg}(R)$. By Remark \ref{remark}, there is an arc from $I$ to $J$ in $\overrightarrow{\Gamma_{reg}}(R)$ if and only if  $\Upsilon_J\supseteq \Upsilon_I\cup\Lambda_I$. So, the out-degree of $I$ in $\overrightarrow{\Gamma_{reg}}(R)$ equals:
$$d^+(I)=
  \begin{cases}
   0;                                   & \Delta_I=\varnothing\\
   \prod_{k\in \Delta_I}(t_k+1)-2;      & \Delta_I\neq\varnothing\ {\rm and}\ \Lambda_I=\varnothing\,\,\, \\
   \prod_{k\in \Delta_I}(t_k+1)-1;       & \Delta_I\neq\varnothing\ {\rm and}\ \Lambda_I\neq\varnothing\,\,\, \,\,
 \end{cases}$$
Also, Remark \ref{remark} implies that there is an arc from $J$ to $I$ in $\overrightarrow{\Gamma_{reg}}(R)$ if and only if $\Delta_J \supseteq\Delta_I\cup\Lambda_I$. Thus the in-degree of $I$ in $\overrightarrow{\Gamma_{reg}}(R)$ equals:
$$d^-(I)=
  \begin{cases}
    0;                                & \Upsilon_I=\varnothing\\
    \prod_{k\in \Upsilon_I}(t_k+1)-2;      & \Upsilon_I\neq\varnothing \ {\rm and}\ \Lambda_I=\varnothing\,\,\, \\
    \prod_{k\in \Upsilon_I}(t_k+1)-1;       & \Upsilon_I\neq\varnothing \ {\rm and}\ \Lambda_I\neq\varnothing.\,\,\, \,\,
 \end{cases}$$
Therefore,
 $$d(I)= \begin{cases}
    0;                                   & \Lambda_I=\{1,\ldots,n\}\\
   \prod_{k\in \Delta_I}(t_k+1)+\prod_{k\in \Upsilon_I}(t_k+1)-4;      & \Lambda_I=\varnothing\,\,\, \\
   \prod_{k\in \Upsilon_I}(t_k+1)-1;       &\Lambda_I\neq\varnothing,\Upsilon_I\neq\varnothing\ {\rm and}\ \Delta_I=\varnothing\,\, \,\,\\
   \prod_{k\in \Delta_I}(t_k+1)-1;       &\Lambda_I\neq\varnothing,\Upsilon_I=\varnothing\ {\rm and}\ \Delta_I\neq\varnothing\,\, \,\,\\
   \prod_{k\in \Delta_I}(t_k+1)+\prod_{k\in \Upsilon_I}(t_k+1)-2; &\Lambda_I\neq\varnothing,\Upsilon_I\neq\varnothing\ {\rm and}\ \Delta_I\neq\varnothing.\,\, \,\,
 \end{cases}$$
\noindent Now, we consider the following two subcases:

Subcase 1. $R$ contains no field as its direct summand. From the above argument, we conclude that $I$ has maximum degree if and only if either $\Delta_I=\{1,\ldots,n\}\setminus\{j\}$ and $\Lambda_I=\{j\}$ or $\Upsilon_I=\{1,\ldots,n\}\setminus\{j\}$ and $\Lambda_I=\{j\}$, for some $j\in T_n$. Let $u$ be a vertex with maximum degree in $\Gamma_{reg}(R)$. Then with no loss of generality, we can suppose that either $u=R_1\times\cdots\times R_{n-1}\times I_n$ or $u=(0)\times\cdots\times (0)\times J_n$, where $I_n$ and $J_n$ are non-trivial ideals of $R_n$. First suppose that $u=R_1\times\cdots\times R_{n-1}\times I_n$, where $I_n$ is non-trivial ideal of $R_n$. Consider the vertex $v=\mathfrak{m}_1\times\cdots\times \mathfrak{m}_{n-1}\times (0)$, where $\mathfrak{m}_i$ is the maximal ideal of $R_i$, for every $1\leq i\leq n$. By Remark \ref{remark}, a vertex $J$ in $\Gamma_{reg}(R)$ is adjacent to $v$ if and only if $J=R_1\times\cdots\times R_{n-1}\times J_n$, for some proper ideal $J_n$ of $R_n$. Therefore, the following statements are true:
 \begin{enumerate}
 \item[\rm (a)] $u$ and $v$ are two adjacent vertices in $\Gamma_{reg}(R)$ and $d(v)=t_n$.
 \item[\rm (b)] $\Delta(\Gamma_{reg}(R))=d(u)=\prod_{k=1}^{n-1}(t_k+1)-1$.
 \item[\rm (c)] The number of vertices with maximum degree in $\Gamma_{reg}(R)$ is $2s_n(t_n-1)$.
 \end{enumerate} Since $n\geq 3$ and $t_i\geq 2$, for every $i\geq1$, from the above statements, we have:
 \begin{eqnarray*}\label{one}
\Delta(\Gamma_{reg}(R))-d(v)+2-2s_n(t_n-1)=\prod_{k=1}^{n-1}(t_k+1)-(2s_n+1)(t_n-1)\\
 \geq  3^{n-2}(t_n+1)-(2n+1)(t_n-1)
\\=(3^{n-2}-2n-1)+2(3^{n-2})>0
 \end{eqnarray*}
Hence  $\Delta(\Gamma_{reg}(R))-d(v)+2$ is more than the number of vertices with maximum degree in $\Gamma_{reg}(R)$. Thus by Lemma \ref{firstkindgraph}, $\chi'(\Gamma_{reg}(R))=\Delta(\Gamma_{reg}(R))$. Now, suppose that $u=(0)\times\cdots\times (0)\times J_n$, for some non-trivial ideal  $J_n$ of $R_n$. In this case, consider the vertex $v=\mathfrak{m}_1\times\cdots\times \mathfrak{m}_{n-1}\times R_n$, where $\mathfrak{m}_i$ is the maximal ideal of $R_i$, for every $1\leq i\leq n$. Then a similar argument to that of above shows that $u$ and $v$ are adjacent and $\Delta(\Gamma_{reg}(R))-d(v)+2$ is more than the number of vertices with maximum degree in $\Gamma_{reg}(R)$. So, Lemma \ref{firstkindgraph} implies that $\chi'(\Gamma_{reg}(R))=\Delta(\Gamma_{reg}(R))$.

Subcase 2. $R$ contains a field as its direct summand. In this case, $R_n$ is a field. From the argument before subcase 1, we conclude that $I$ has maximum degree if and only if either $\Delta_I=\{1,\ldots,n\}\setminus\{j\}$ and $\Upsilon_I=\{j\}$ or $\Upsilon_I=\{1,\ldots,n\}\setminus\{j\}$ and $\Delta_I=\{j\}$, for some $j\in T_n$. So, if $u$ is a vertex with maximum degree in $\Gamma_{reg}(R)$, then with no loss of generality, we can suppose that either $u=R_1\times\cdots\times R_{n-1}\times (0)$ or $u=(0)\times \cdots\times (0)\times R_n$. First suppose that $u=R_1\times\cdots\times R_{n-1}\times (0)$. Consider the vertex $v=\mathfrak{m}_1\times\cdots\times \mathfrak{m}_{n-k}\times(0)\times\cdots\times (0)$, where $k$ is the number of fields, appearing in the decomposition of $R$ to local rings. Then it is clear that a vertex $J=J_1\times\cdots\times J_n$ is adjacent to $v$ if and only if $J_i=R_i$, for every $1\leq i\leq n-k$. Therefore, the following statements are true:
\begin{enumerate}
\item[\rm(a$^{\prime}$)] $u$ and $v$ are two adjacent vertices in $\Gamma_{reg}(R)$ and $d(v)=2^k$.
 \item[\rm (b$^{\prime}$)] $\Delta(\Gamma_{reg}(R))=d(u)=\prod_{k=1}^{n-1}(t_k+1)-2$.
 \item[\rm (c$^{\prime}$)] The number of vertices with maximum degree in $\Gamma_{reg}(R)$ is $2k$.
  \end{enumerate}
Thus $\Delta(\Gamma_{reg}(R))-d(v)+2=\prod_{k=1}^{n-1}(t_k+1)-2^k$. Since $R$ is not reduced and $n\geq 3$, we deduce that this number is greater than the number of vertices with maximum degree in $\Gamma_{reg}(R)$ and hence Lemma \ref{firstkindgraph} implies that $\chi'(\Gamma_{reg}(R))=\Delta(\Gamma_{reg}(R))$. Now, assume that $u=(0)\times \cdots\times (0)\times R_n$ and consider the vertex $v=\mathfrak{m}_1\times\cdots\times \mathfrak{m}_{n-k}\times R_{k+1}\times\cdots\times R_n$. Then a similar proof to that of above shows that $u$ and $v$ are adjacent and we have:
\begin{eqnarray*}
\Delta(\Gamma_{reg}(R))-d(v)+2=\prod_{k=1}^{n-1}(t_k+1)-2^k+1\geq 3^{n-1}-2^k+1,
\end{eqnarray*}
which is greater than  the number of vertices with maximum degree in $\Gamma_{reg}(R)$. Thus by Lemma \ref{firstkindgraph}, $\chi'(\Gamma_{reg}(R))=\Delta(\Gamma_{reg}(R))$ and so the proof is complete.~}
\end{proof}

\vspace{5mm} \noindent{\bf\large 3. A Formula for the Clique Number in Artinian Rings}\\ 

Let $R=\mathbb{Z}_{36}\cong \mathbb{Z}_4\times \mathbb{Z}_9$. Then it is clear that $R$ is an Artinian ring with two maximal ideals. On the other hand, we know that $C=\{\mathbb{Z}_4\times (0), \mathbb{Z}_4\times (3), (2)\times (0)\}$ is a clique in $\Gamma_{reg}(R)$ and so $\omega(\Gamma_{reg}(R))\geq 3>|{\rm Max}(R)|$; therefore, this implies that the upper bound for $\omega(\Gamma_{reg}(R))$ in \cite[Theorem 2.1]{actamathhungar} is incorrect. The regular digraph of $\mathbb{Z}_{36}$ is seen in the following figure:
\begin{center}
\begin{tikzpicture}
        \GraphInit[vstyle=Classic]
        \Vertex[x=0,y=1.5,style={black,minimum size=3pt},LabelOut=true,Lpos=90]{3}
        \Vertex[x=0.75,y=0,style={black,minimum size=3pt},LabelOut=true,Lpos=0]{18}
        \Vertex[x=-0.75,y=0,style={black,minimum size=3pt},LabelOut=true,Lpos=180]{9}

        \Edges(3,18)
        \Edges(9,3)
        \Edges(9,18)
\end{tikzpicture}
\begin{tikzpicture}
        \GraphInit[vstyle=Classic]
        (\Vertex[x=0,y=1.5,style={black,minimum size=3pt},LabelOut=true,Lpos=90]{2})
        \Vertex[x=0.75,y=0,style={black,minimum size=3pt},LabelOut=true,Lpos=0]{4}
        \Vertex[x=-0.75,y=0,style={black,minimum size=3pt},LabelOut=true,Lpos=180]{12}

        \Edges(2,4)
        \Edges(2,12)
        \Edges(4,12)
\end{tikzpicture}
\begin{tikzpicture}
        \GraphInit[vstyle=Classic]
        \Vertex[x=-0.75,y=0,style={black,minimum size=3pt},LabelOut=true,Lpos=90]{$6$}
        \end{tikzpicture}

    
    $\Gamma_{reg}(\mathbb{Z}_{36})$
\end{center}

In this section, we give a correct upper bound for $\omega(\Gamma_{reg}(R))$, when $R$ is an Artinian ring.  In fact, it is shown that for every Artinian ring $R$, $|{\rm Max}(R)|-1\leq\omega(\Gamma_{reg}(R))\leq 2|{\rm Max}(R)|-1$ and the lower bound occurs if and only if $R$ is a reduced ring. If $R$ is an Artinian local ring which is not a field, then by {\rm\cite[Theorem 2.1]{actamathhungar}}, $\omega(\Gamma_{reg}(R))=1$. Also, it is clear that for every field $F$, $\omega(\Gamma_{reg}(F))=0$.

\begin{lem}\label{omegaproduct}
Let $S$ be an Artinian ring and $T$ be an Artinian local ring. If $R\cong S\times T$, then
$$\omega(\Gamma_{reg}(R))=
\begin{cases}
\omega(\Gamma_{reg}(S))+1; & T\, {\rm is \ a \ field}\\
\omega(\Gamma_{reg}(S))+2; & T\, {\rm is \ not \ a \ field}.\\
 \end{cases}
 $$
\end{lem}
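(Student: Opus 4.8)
The plan is to pass to the combinatorial description of adjacency and reduce the whole statement to a question about the longest ``chain'' of ideals. Write $S\cong S_1\times\cdots\times S_m$ with each $S_i$ Artinian local (by \cite[Theorem 8.7]{ati}), so that $R\cong S_1\times\cdots\times S_m\times T$ is a product of Artinian local rings. For an ideal $I=(I_1,\dots)$ let $\mathrm{supp}(I)$ be the set of coordinates where $I_k\ne(0)$ and $\mathrm{full}(I)$ the set of coordinates where $I_k$ is the whole factor. Since a proper ideal of an Artinian local ring contains no element regular on a nonzero ideal (this is exactly the content of the local case, \cite[Theorem 2.1]{actamathhungar}), Remark \ref{remark}(i) decodes to: there is an arc $I\to J$ in $\overrightarrow{\Gamma_{reg}}(R)$ if and only if $\mathrm{supp}(J)\subseteq\mathrm{full}(I)$. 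The first thing I would record is that this arc relation is transitive on any clique, because $\mathrm{full}(\cdot)\subseteq\mathrm{supp}(\cdot)$: from $I\to J$ and $J\to K$ one gets $\mathrm{supp}(K)\subseteq\mathrm{full}(J)\subseteq\mathrm{supp}(J)\subseteq\mathrm{full}(I)$, hence $I\to K$; and two distinct clique vertices admit exactly one arc (two arcs would force all four sets equal, i.e.\ $I=J$). Thus every clique of $\Gamma_{reg}(R)$ is a chain $I^{(1)}\to\cdots\to I^{(q)}$, and $\omega(\Gamma_{reg}(R))$ is the length of the longest such chain; the same applies verbatim to $S$.

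Next I would extract the rigid behaviour of the $T$-coordinate. Writing $I^{(a)}=(A_a,B_a)$ with $A_a$ an ideal of $S$ and $B_a$ an ideal of $T$, the arc condition $I^{(a)}\to I^{(b)}$ forces, on the $T$-coordinate, either $B_b=(0)$ or $B_a=T$. Reading this down the chain shows that the sequence $B_1,\dots,B_q$ must consist of a block of copies of $T$, followed by \emph{at most one} proper nonzero ideal (which can occur only when $T$ is not a field), followed by a block of $(0)$'s. This single structural fact is what separates the two cases of the lemma: the $T$-coordinate runs through two possible types when $T$ is a field and through three when it is not.

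For the lower bound I would lift a maximum chain of $S$. Take a maximum chain $A_1\to\cdots\to A_\omega$ in $\Gamma_{reg}(S)$, where $\omega=\omega(\Gamma_{reg}(S))$. The vertices $(A_1,T),\dots,(A_\omega,T)$ form a clique of $\Gamma_{reg}(R)$, since a full $T$-coordinate never obstructs an arc, so the arcs are inherited from $S$. Appending $(0,T)$ at the bottom (one checks $(A_i,T)\to(0,T)$ directly) yields a clique of size $\omega+1$, which settles the field case. When $T$ is not a field I would add one more vertex $(0,B)$ with $B$ a proper nonzero ideal of $T$, verifying $(A_i,T)\to(0,B)$ and $(0,T)\to(0,B)$, to obtain a clique of size $\omega+2$; the construction still works when $\omega=0$, i.e.\ when $S$ is a field.

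For the upper bound I would project a maximum clique (chain) of $R$ onto its $S$-coordinates. The distinct non-trivial ideals among $A_1,\dots,A_q$ again form a chain, hence a clique $C'$ of $\Gamma_{reg}(S)$, so $|C'|\le\omega(\Gamma_{reg}(S))$; the task is to bound the loss $q-|C'|$ by $c$. I would write $q-|C'|$ as the number of indices $a\ge2$ with $A_a=A_{a-1}$ (the repeats) plus the number of trivial values ($(0)$ or $S$) occurring among the $A_a$. A repeat forces $A_{a-1}=A_a$ and, since the two vertices are distinct, $B_a\ne B_{a-1}$; hence \emph{every repeat consumes a change in the $B$-sequence}, of which, by the $T$-structure, there are at most $1$ (field case) or $2$ (non-field case). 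The delicate point — and the step I expect to be the main obstacle — is to prevent trivial $S$-values and repeats from accumulating beyond $c$. Here I would use that $A_a=S$ can occur only at the very top of the chain and forces the block of $T$'s to be empty, while $A_a=(0)$ can occur only at the bottom and forces the block of $(0)$'s to be empty; these two extremes are therefore mutually exclusive once $q\ge2$, and whichever occurs removes exactly one available $B$-change. A short case analysis on which extreme is present then gives $q-|C'|\le 1+\epsilon=c$, where $\epsilon=1$ precisely when $T$ is not a field, completing the proof.
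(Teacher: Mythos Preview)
Your argument is correct, and it takes a genuinely different route from the paper. The paper never decomposes $S$; it works directly with the product $R\cong S\times T$, partitions a maximum clique $C'$ of $\Gamma_{reg}(R)$ according to whether the $T$-coordinate is $(0)$, $T$, or nontrivial (sets $C_2,C_3,C_1$), observes $|C_1|\le 1$, and then shows $|C_2\cup C_3|\le\omega(\Gamma_{reg}(S))$ and $|C_2\cap C_3|\le1$ via Remark~\ref{remark}(ii). Your approach instead pushes the local decomposition all the way down to $S$, extracts the structural fact that every clique of $\Gamma_{reg}(R)$ is a \emph{chain} for the arc relation (transitive tournament), and then bounds the loss $q-|C'|$ by counting $B$-changes along the chain. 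What you gain is a transparent global picture---the ``cliques are chains'' observation essentially explains the whole clique number formula of Proposition~\ref{omegaformula} in one stroke and makes the role of non-field factors (each allowing one extra rung) visible. What the paper's approach buys is economy: it needs no further decomposition of $S$ and no discussion of repeats or consecutive blocks, only the inclusion--exclusion $|C_2|+|C_3|=|C_2\cup C_3|+|C_2\cap C_3|$. Two small points worth tightening in your write-up: (i) make explicit that equal $A$-values occur in a consecutive block (you use this when equating $q-|C'|$ with ``repeats plus trivial types''; it follows from your support/full calculus but is not stated), and (ii) if $\omega(\Gamma_{reg}(S))=\infty$ you cannot literally ``take a maximum chain''---just lift arbitrarily large finite chains.
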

\begin{proof}
{First note that for every clique $C$ of $\Gamma_{reg}(S)$,  $C\times \{T\}$ is a clique of $\Gamma_{reg}(R)$. Also, for any clique $C'=\{I_i\times J_i\}_{i\in A}$ of $\Gamma_{reg}(R)$, from Remark \ref{remark} and \cite[Theorem 2.1]{actamathhungar}, we deduce that $\{J_i|\ I_i\times J_i\in C'\}_{i\in A}$ contains at most one nontrivial ideal. Therefore, $\omega(\Gamma_{reg}(S))$  is infinite if and only if $\omega(\Gamma_{reg}(R))$ is infinite.
Now, assume that $\omega(\Gamma_{reg}(S))$ is finite and $C$ is a  clique of $\Gamma_{reg}(S)$  with  $|C|=\omega(\Gamma_{reg}(S))$. If $T$ is a field, then  $C\times\{T\}\cup\{(0)\times T\}$ is a clique of $\Gamma_{reg}(S)$. Also, if  $T$ is not a field, then for every nontrivial ideal $J$ of $T$, $C\times\{T\}\cup\{(0)\times J,(0)\times T\}$ is a clique of $\Gamma_{reg}(R)$. Therefore,
 $$\omega(\Gamma_{reg}(R))\geq
\begin{cases}
\omega(\Gamma_{reg}(S))+1; & T\, {\rm is \ a \ field}\\
\omega(\Gamma_{reg}(S))+2; & T\, {\rm is \ not \ a \ field}.\\
 \end{cases}
 $$
 Next, we prove the inverse inequality. To see this, let
$C'=\{I_i\times J_i|
\ 1\leq i\leq t\}$ be a maximal clique of $\Gamma_{reg}(R)$. Setting
$$C_1=\{I_i\times J_i\in C'|\  J_i\ {\rm is \ a \ nontrivial \ ideal \ of }\  T\},$$
we deduce that there are sets $C_2,C_3$ such that $$C'=C_1\cup  (C_2\times \{(0)\})\cup (C_3\times \{T\}).$$
Hence
\begin{eqnarray}\label{1}
|C'|=|C_1|+|C_2|+|C_3|.
\end{eqnarray}
From \cite[Theorem 2.1]{actamathhungar} and Remark \ref{remark}, it follows that $|C_1|\leq 1$; moreover, if $T$ is a field, then $|C_1|=0$.
Now, we follow the proof in the following two cases:

Case 1. Either $C_2=\varnothing$ or $C_3=\varnothing$. If $C_2=\varnothing$ (resp. $C_3=\varnothing$), then by Remark \ref{remark}, $C_3\setminus\{(0)\}$ (resp. $C_2\setminus\{T\}$) is a clique of $\Gamma_{reg}(S)$. This implies that $|C_2|+ |C_3|\leq \omega(\Gamma_{reg}(S))+1$. Thus by (1), we have:
$$\omega(\Gamma_{reg}(R))=|C'|\leq
\begin{cases}
\omega(\Gamma_{reg}(S))+1; & T\, {\rm is \ a \ field}\\
\omega(\Gamma_{reg}(S))+2; & T\, {\rm is \ not \ a \ field}.\\
 \end{cases}
 $$

Case 2. $C_2\neq\varnothing$ and $C_3\neq\varnothing$. In this case, one can easily check that $C_2$ and $C_3$ contain only nontrivial ideals. Also, it follows from Remark \ref{remark} that $C_2\cup C_3$ is a clique of $\Gamma_{reg}(S)$, and this implies that $|C_2\cup C_3|\leq \omega(\Gamma_{reg}(S))$.
We claim that $|C_2\cap C_3|\leq 1$. Suppose to the contrary, $I_1,J_1\in C_2\cap C_3$. Then it is clear that $I_1,J_1$ are nontrivial ideals of $S$, and $\{I_1\times (0),I_1\times T, J_1\times (0), J_1\times T\}\subseteq C'$. Thus  $\overrightarrow{\Gamma_{reg}}(R)$ contains the arcs $I_2\times T \longrightarrow I_1\times (0)$ and $I_1\times T \longrightarrow I_2\times (0)$. Hence $I_1$ contains an $I_2$-regular element and $I_2$ contains an $I_1$-regular element, and this contradicts Remark \ref{remark}(ii). So the claim is proved and hence,
 $$|C_2|+ |C_3|=|C_2\cup C_3|+ |C_2\cap C_3|\leq \omega(\Gamma_{reg}(S)) +1.$$
Thus again by (1), we have: $$\omega(\Gamma_{reg}(R))=|C'|\leq
\begin{cases}
\omega(\Gamma_{reg}(S))+1; & T\, {\rm is \ a \ field}\\
\omega(\Gamma_{reg}(S))+2; & T\, {\rm is \ not \ a \ field}.\\
 \end{cases}
 $$
Therefore, in any case, the assertion follows.}
\end{proof}

For any Artinian ring $R$, by $f(R)$, we denote the number of fields, appeared in the decomposition of $R$ to direct product of local rings.

\begin{prop}\label{omegaformula}
For any Artinian ring $R$,  $\omega(\Gamma_{reg}(R))=2|{\rm Max}(R)|-f(R)-1$.
\end{prop}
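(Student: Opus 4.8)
The plan is to prove the formula by induction on $n = |{\rm Max}(R)|$, using Lemma \ref{omegaproduct} as the engine of the inductive step. By \cite[Theorem 8.7]{ati} we may write $R \cong R_1 \times \cdots \times R_n$ with each $R_i$ an Artinian local ring, and since the maximal ideals of such a product are precisely the ideals of the form $R_1 \times \cdots \times \mathfrak{m}_i \times \cdots \times R_n$, we have $n = |{\rm Max}(R)|$; recall also that $f(R)$ counts how many of the factors $R_i$ are fields.

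For the base case $n = 1$ the ring $R$ is Artinian local, and the two computations recorded just before the statement settle it: if $R$ is a field then $\omega(\Gamma_{reg}(R)) = 0 = 2\cdot 1 - 1 - 1$ (here $f(R) = 1$), while if $R$ is local but not a field then $\omega(\Gamma_{reg}(R)) = 1 = 2 \cdot 1 - 0 - 1$ (here $f(R) = 0$). In either situation the right-hand side of the claimed formula is indeed the clique number.

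For the inductive step I would set $S = R_1 \times \cdots \times R_{n-1}$ and $T = R_n$, so that $R \cong S \times T$ with $T$ Artinian local and $|{\rm Max}(S)| = n - 1$. Applying Lemma \ref{omegaproduct} together with the induction hypothesis $\omega(\Gamma_{reg}(S)) = 2(n-1) - f(S) - 1$, one checks the two cases. If $T$ is a field then $f(R) = f(S) + 1$ and Lemma \ref{omegaproduct} gives
$$\omega(\Gamma_{reg}(R)) = \omega(\Gamma_{reg}(S)) + 1 = 2(n-1) - f(S) - 1 + 1 = 2n - f(R) - 1.$$
If $T$ is not a field then $f(R) = f(S)$ and the lemma gives
$$\omega(\Gamma_{reg}(R)) = \omega(\Gamma_{reg}(S)) + 2 = 2(n-1) - f(S) - 1 + 2 = 2n - f(R) - 1,$$
which completes the induction in both cases.

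Essentially all the genuine difficulty has already been absorbed into Lemma \ref{omegaproduct}; what remains is purely the bookkeeping of how $f$ and $|{\rm Max}|$ change when one local factor is split off, so I do not anticipate any real obstacle. The one point worth flagging is finiteness: an Artinian local ring may have infinitely many nontrivial ideals, but by \cite[Theorem 2.1]{actamathhungar} its regular graph is edgeless, hence its clique number is $0$ or $1$ regardless. Thus the base case is finite, and the equivalence established inside Lemma \ref{omegaproduct} (that $\omega(\Gamma_{reg}(S))$ is infinite if and only if $\omega(\Gamma_{reg}(R))$ is infinite) guarantees inductively that $\omega(\Gamma_{reg}(R))$ is finite for every Artinian ring, so the finite arithmetic above is always legitimate.
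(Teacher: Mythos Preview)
Your proof is correct and follows essentially the same route as the paper: induction on $n=|{\rm Max}(R)|$ after decomposing $R$ into Artinian local factors, with Lemma~\ref{omegaproduct} driving the inductive step in the two cases according to whether the split-off factor is a field. Your added remark on finiteness is a nice touch that the paper leaves implicit.
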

\begin{proof}
{If $R$ is a field, then there is nothing to prove. So, assume that $R$ is an Artinian ring which is not a field. Then  \cite[Theorem 8.7]{ati} implies that $R\cong R_1\times R_2\times\cdots\times R_n$, where $n=|{\rm Max}(R)|$ and every $R_i$ is an Artinian local ring. We prove the assertion, by induction on $n$. If $n=1$, then the assertion follows from \cite[Theorem 2.1]{actamathhungar}. Thus we can assume that $n\geq 2$. Now, setting $S=R_1\times R_2\times\cdots\times R_{n-1}$, we follow the proof in the following two cases:

Case 1. $R_n$ is a field. In this case,  the induction hypothesis implies that
$$\omega(\Gamma_{reg}(R'))=2|{\rm Max}(R')|-f(R')-1=2(n-1)-(f(R)-1)-1=2n-f(R)-2;$$
Thus by Lemma \ref{omegaproduct}, we have:
$$\omega(\Gamma_{reg}(R))=\omega(\Gamma_{reg}(R'))+1=2n-f(R)-1.$$

Case 2. $R_n$ is not a field. In this case,  the induction hypothesis implies that
$$\omega(\Gamma_{reg}(R'))=2|{\rm Max}(R')|-f(R')-1=2(n-1)-f(R)-1=2n-f(R)-3;$$
Thus again by Lemma \ref{omegaproduct}, we have:
$$\omega(\Gamma_{reg}(R))=\omega(\Gamma_{reg}(R'))+2=2n-f(R)-1.$$
Therefore, in any case, the assertion follows.
}
\end{proof}

From \cite[Theorem 2.3]{actamathhungar} and Proposition \ref{omegaformula}, we have the following corollary.

\begin{cor}
Let $R$ be an Artinian ring. Then
\begin{enumerate}
\item[\rm (i)] $\omega(\Gamma_{reg}(R))=\chi(\Gamma_{reg}(R))$.
\item[\rm (ii)] If $R$ is reduced, then $\omega(\Gamma_{reg}(R))=|{\rm{Max}}(R)|-1$.
\end{enumerate}
\end{cor}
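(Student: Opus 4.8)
The plan is to establish both parts by the standard sandwich between clique number and chromatic number, feeding in the exact value of $\omega$ already computed in Proposition \ref{omegaformula}. For part (i), I would start from the elementary inequality $\omega(\Gamma_{reg}(R))\leq\chi(\Gamma_{reg}(R))$, valid for every simple graph since the vertices of a maximum clique must receive pairwise distinct colors. The reverse inequality is where the cited result enters: \cite[Theorem 2.3]{actamathhungar} supplies an upper bound on the vertex chromatic number, namely $\chi(\Gamma_{reg}(R))\leq 2|{\rm Max}(R)|-f(R)-1$. Combining this with Proposition \ref{omegaformula}, which gives $\omega(\Gamma_{reg}(R))=2|{\rm Max}(R)|-f(R)-1$, yields the chain
$$\omega(\Gamma_{reg}(R))\leq\chi(\Gamma_{reg}(R))\leq 2|{\rm Max}(R)|-f(R)-1=\omega(\Gamma_{reg}(R)),$$
forcing equality throughout, which is exactly (i).

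For part (ii), the key observation is purely ring-theoretic: if $R$ is Artinian and reduced, then by \cite[Theorem 8.7]{ati} it decomposes as a finite product of Artinian local rings, and reducedness forces each local factor to be a field (an Artinian local ring has nilpotent maximal ideal, which must vanish when the nilradical is zero). Hence every factor in the decomposition is a field, so $f(R)=|{\rm Max}(R)|$. Substituting this into the formula of Proposition \ref{omegaformula} gives
$$\omega(\Gamma_{reg}(R))=2|{\rm Max}(R)|-f(R)-1=2|{\rm Max}(R)|-|{\rm Max}(R)|-1=|{\rm Max}(R)|-1,$$
which is the claimed value.

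The argument is essentially a two-line deduction once the preceding machinery is in place, so there is no genuine obstacle beyond correctly invoking the external upper bound. The one point requiring care is to confirm that \cite[Theorem 2.3]{actamathhungar} indeed delivers the matching bound $2|{\rm Max}(R)|-f(R)-1$ on $\chi$ rather than a weaker estimate such as $2|{\rm Max}(R)|-1$; if the cited bound were not already tight, one would instead have to exhibit an explicit proper vertex coloring of $\Gamma_{reg}(R)$ using $2|{\rm Max}(R)|-f(R)-1$ colors, which could be built recursively along the product decomposition $R\cong S\times R_n$ in parallel with the clique construction of Lemma \ref{omegaproduct}.
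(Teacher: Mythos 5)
Your proposal is correct and uses exactly the two ingredients the paper itself cites for this corollary (\cite[Theorem 2.3]{actamathhungar} for the bound on $\chi$, and Proposition \ref{omegaformula} together with the observation that $f(R)=|{\rm Max}(R)|$ for a reduced Artinian ring); the paper gives no further proof, so your sandwich argument is essentially the intended one. Your closing caveat about whether the cited theorem delivers the tight bound is well placed, but the fallback you sketch (an explicit recursive coloring alongside Lemma \ref{omegaproduct}) would cover that case.
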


Now, we state the correct version of Theorem 2.2 from \cite{actamathhungar}.

\begin{thm}\label{artinianomega}
If $R$ is an Artinian ring, then $|{\rm{Max}}(R)|-1\leq\omega(\Gamma_{reg}(R))\leq 2|{\rm{Max}}(R)|-1$. Moreover, $\omega(\Gamma_{reg}(R))=|{\rm{Max}}(R)|-1$ if and only if $R$ is reduced.
\end{thm}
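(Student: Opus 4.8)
The plan is to derive the entire statement from the exact formula $\omega(\Gamma_{reg}(R)) = 2|{\rm Max}(R)| - f(R) - 1$ established in Proposition \ref{omegaformula}, simply by controlling the range of the quantity $f(R)$. Writing $R \cong R_1 \times \cdots \times R_n$ with $n = |{\rm Max}(R)|$ and each $R_i$ an Artinian local ring (via \cite[Theorem 8.7]{ati}), the number $f(R)$ counts how many of the factors $R_i$ are fields, and so it clearly satisfies $0 \leq f(R) \leq n$.

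First I would substitute the two extreme values of $f(R)$ into the formula. The expression $2n - f(R) - 1$ is strictly decreasing in $f(R)$, so over $f(R) \in \{0,1,\ldots,n\}$ its maximum is attained at $f(R)=0$ and its minimum at $f(R)=n$. Taking $f(R) = 0$ gives the upper bound $\omega(\Gamma_{reg}(R)) = 2n - 1 = 2|{\rm Max}(R)| - 1$, while taking $f(R) = n$ gives the lower bound $\omega(\Gamma_{reg}(R)) = n - 1 = |{\rm Max}(R)| - 1$. This immediately yields the chain $|{\rm Max}(R)| - 1 \leq \omega(\Gamma_{reg}(R)) \leq 2|{\rm Max}(R)| - 1$.

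For the equality characterization, the key observation is the ring-theoretic fact that an Artinian local ring is a field if and only if it is reduced: in such a ring the maximal ideal is nilpotent, hence it vanishes precisely when the ring has no nonzero nilpotent element. Consequently $R \cong R_1 \times \cdots \times R_n$ is reduced if and only if every factor $R_i$ is a field, that is, if and only if $f(R) = n$. By the formula, $f(R) = n$ is equivalent to $\omega(\Gamma_{reg}(R)) = n - 1 = |{\rm Max}(R)| - 1$, which is exactly the asserted ``if and only if''.

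There is essentially no serious obstacle here, since Proposition \ref{omegaformula} already carries all the combinatorial weight; the only point requiring a moment of care is the equivalence between $R$ being reduced and $f(R)$ attaining its maximum value $n$, which rests on the local-Artinian-reduced-implies-field remark above. The degenerate case $n = 1$ is consistent with both conclusions: a non-field local Artinian ring has $f(R)=0$, giving $\omega = 1 = 2|{\rm Max}(R)|-1$ and correctly failing the reduced characterization, while a field has $f(R)=n=1$, giving $\omega = 0 = |{\rm Max}(R)|-1$ as required.
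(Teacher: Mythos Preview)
Your proposal is correct and matches the paper's approach: the paper presents Theorem~\ref{artinianomega} without an explicit proof, immediately after Proposition~\ref{omegaformula} and its corollary, precisely because it follows by bounding $f(R)$ between $0$ and $|{\rm Max}(R)|$ in the formula $\omega(\Gamma_{reg}(R))=2|{\rm Max}(R)|-f(R)-1$, exactly as you do.
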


\noindent{\bf\large 4. The Case that $R$ is a Reduced ring
}\\ 

In this section the clique number, the vertex chromatic number and the edge chromatic number of $\Gamma_{reg}(R)$ are determined, when $R$ is a reduced ring.
First, we recall the following interesting
result, due to Eben Matlis.

\begin{prop}\label{matlis} {\rm\cite[Proposition 1.5]{matlis}}
Let $R$ be a ring and $\{{\mathfrak{p}}_1,\ldots,
{\mathfrak{p}}_n\}$ be a finite set of distinct minimal prime ideals
of $R$. Let $S=R\setminus \bigcup_{i=1}^{n}{\mathfrak{p}}_i$. Then
$R_S \cong R_{{\mathfrak{p}}_1}\times\cdots\times
R_{{\mathfrak{p}}_n}$.
\end{prop}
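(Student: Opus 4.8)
The plan is to exhibit the canonical comparison map and prove it is an isomorphism by a local check. For each $i$ the multiplicative set $S$ is contained in $R\setminus\mathfrak{p}_i$, so the localization map $R\to R_{\mathfrak{p}_i}$ factors through $R_S$; assembling these factorizations gives a ring homomorphism $\phi\colon R_S\to R_{\mathfrak{p}_1}\times\cdots\times R_{\mathfrak{p}_n}$. Everything then reduces to showing $\phi$ is bijective, which I would do by verifying that $\phi$ becomes an isomorphism after localizing at each maximal ideal of $R_S$ (recall that an $R_S$-module map is an isomorphism if and only if it is so at every maximal ideal, and that localization commutes with the finite product appearing on the right).

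First I would pin down $\operatorname{Spec}(R_S)$. Its primes correspond to the primes $\mathfrak{q}$ of $R$ with $\mathfrak{q}\cap S=\varnothing$, that is $\mathfrak{q}\subseteq\bigcup_{i}\mathfrak{p}_i$; by prime avoidance $\mathfrak{q}\subseteq\mathfrak{p}_i$ for some $i$, and since $\mathfrak{p}_i$ is minimal this forces $\mathfrak{q}=\mathfrak{p}_i$. Hence $R_S$ has exactly the primes $\mathfrak{m}_i:=\mathfrak{p}_iR_S$, which are pairwise incomparable (distinct minimal primes are incomparable) and therefore are simultaneously the minimal and the maximal ideals of $R_S$. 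By transitivity of localization, $(R_S)_{\mathfrak{m}_i}\cong R_{\mathfrak{p}_i}$, so the target of $\phi$ is literally the product of the local rings of $R_S$ at its own maximal ideals.

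The crux is to compute the localization of the target at a fixed $\mathfrak{m}_j$. Since localization commutes with finite products, I must show that the cross terms vanish, namely $(R_{\mathfrak{p}_i})_{\mathfrak{m}_j}=0$ for $i\neq j$. Because $\mathfrak{p}_i$ and $\mathfrak{p}_j$ are distinct minimal primes they are incomparable, so I may choose $t\in\mathfrak{p}_i\setminus\mathfrak{p}_j$. On the one hand $R_{\mathfrak{p}_i}$ is a local ring whose only prime is $\mathfrak{p}_iR_{\mathfrak{p}_i}$, so that maximal ideal equals the nilradical and the image of $t$ is nilpotent in $R_{\mathfrak{p}_i}$. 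On the other hand $t\notin\mathfrak{p}_j$ yields $t\notin\mathfrak{m}_j$, so $t$ becomes invertible after localizing at $\mathfrak{m}_j$. An element that is simultaneously nilpotent and invertible kills the module, whence $(R_{\mathfrak{p}_i})_{\mathfrak{m}_j}=0$. The surviving factor is $(R_{\mathfrak{p}_j})_{\mathfrak{m}_j}\cong R_{\mathfrak{p}_j}$ (relocalizing a localization of $R_S$ changes nothing), and under these identifications $\phi$ localized at $\mathfrak{m}_j$ is the identity of $R_{\mathfrak{p}_j}$. As this holds for every $j$, $\phi$ is an isomorphism.

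The main obstacle is conceptual rather than computational: one must resist assuming $R$ is Noetherian, so the argument can rely only on facts valid in an arbitrary ring — existence and incomparability of minimal primes, prime avoidance, and the behaviour of localization — together with the dimension-zero phenomenon that each $\mathfrak{p}_iR_{\mathfrak{p}_i}$ is its own nilradical. An alternative, more structural route would observe that $\operatorname{Spec}(R_S)$ is a finite $T_1$, hence discrete, space, and that its decomposition into the $n$ clopen points produces orthogonal idempotents splitting $R_S$ as a product of $n$ local rings, each necessarily the corresponding $R_{\mathfrak{p}_i}$. I would favour the localization argument, since it delivers the stated isomorphism explicitly through the canonical maps.
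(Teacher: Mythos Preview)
The paper does not supply its own proof of this proposition; it is simply quoted from Matlis \cite{matlis} and used as a black box in the proof of Theorem~\ref{bII2.11}. So there is no argument in the paper to compare your proposal against.

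That said, your argument is correct and carefully avoids any Noetherian hypothesis. The identification of $\operatorname{Spec}(R_S)$ via prime avoidance is valid (prime avoidance for finitely many prime ideals needs no finiteness assumption on $R$), and the key observation that $\mathfrak{p}_iR_{\mathfrak{p}_i}$ coincides with the nilradical of $R_{\mathfrak{p}_i}$---because a minimal prime pulls back to the \emph{unique} prime of its own localization---is exactly what makes the cross terms $(R_{\mathfrak{p}_i})_{\mathfrak{m}_j}$ vanish for $i\neq j$. The local check that $\phi_{\mathfrak{m}_j}$ is an isomorphism then goes through as you describe, using that localization commutes with finite direct products and that $(R_S)_{\mathfrak{m}_j}\cong R_{\mathfrak{p}_j}$ by transitivity. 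Your alternative sketch via the discreteness of $\operatorname{Spec}(R_S)$ and orthogonal idempotents is also sound and is in fact closer in spirit to how such structure results are often packaged; either route would be acceptable here.
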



\begin{thm} \label{bII2.11}
Let $R$ be a reduced ring,  $|{\rm Min}(R)|=n\geq 3$ and
$\omega(\Gamma_{reg}(R))<\infty$. Then we have:
\begin{enumerate}
\item[\rm (i)]
$\omega(\Gamma_{reg}(R))=\chi(\Gamma_{reg}(R))=\chi(\Gamma_{reg}(T(R)))=\omega(\Gamma_{reg}(T(R)))=n-1$.
\item[\rm(ii)]  $
\chi'(\Gamma_{reg}(R))=\chi'(\Gamma_{reg}(T(R)))=
\begin{cases}
 2^{n-1}-2;& n\geq 3\\
0;& n=2.
\end{cases}$
\end{enumerate}
\end{thm}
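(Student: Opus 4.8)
The plan is to pass from $R$ to its total quotient ring $T(R)$, identify $T(R)$ as a finite product of fields, read off the three invariants for $T(R)$ from the Artinian theory of Sections 2 and 3, and then argue that the hypothesis $\omega(\Gamma_{reg}(R))<\infty$ collapses $\Gamma_{reg}(R)$ onto $\Gamma_{reg}(T(R))$.

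First I would compute $T(R)$. Since $R$ is reduced with $\mathrm{Min}(R)=\{\mathfrak p_1,\dots,\mathfrak p_n\}$ finite, the zero-divisors are $Z(R)=\bigcup_{i=1}^n\mathfrak p_i$, so $S=R\setminus\bigcup_i\mathfrak p_i$ and $T(R)=R_S$. Proposition \ref{matlis} then gives $T(R)\cong R_{\mathfrak p_1}\times\cdots\times R_{\mathfrak p_n}$. Each $R_{\mathfrak p_i}$ is local and reduced with $\mathfrak p_iR_{\mathfrak p_i}$ as its unique prime (a minimal prime that is also maximal); reducedness forces this prime to be $(0)$, so $R_{\mathfrak p_i}$ is a field $F_i$. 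Hence $T(R)\cong F_1\times\cdots\times F_n$ is Artinian and reduced with $|\mathrm{Max}(T(R))|=n$ and $f(T(R))=n$. Proposition \ref{omegaformula} gives $\omega(\Gamma_{reg}(T(R)))=2n-n-1=n-1$; the corollary equating $\omega$ and $\chi$ for Artinian rings gives $\chi(\Gamma_{reg}(T(R)))=n-1$; and the reduced case of Theorem \ref{edgechromaticnumber} gives $\chi'(\Gamma_{reg}(T(R)))=\Delta(\Gamma_{reg}(T(R)))=2^{n-1}-2$ for $n\ge 3$ (and $0$ for $n=2$). This settles the $T(R)$-halves of (i) and (ii).

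The core of the argument is to transfer these values to $R$, and the device I would use is a support function. For $I\in\mathbb I(R)^*$ set $\mathrm{supp}(I)=\{i:I\not\subseteq\mathfrak p_i\}$. In a reduced ring $rx=0$ iff $\mathrm{supp}(r)\cap\mathrm{supp}(x)=\varnothing$ (as $\bigcap_i\mathfrak p_i=(0)$ and each $\mathfrak p_i$ is prime), and a prime-avoidance sum $\sum a_ib_i$ produces, for any $A\subseteq\mathrm{supp}(I)$, an element $r\in I$ with $\mathrm{supp}(r)\supseteq A$. From these I would prove the key lemma: $I$ contains a $J$-regular element (in the module sense matched by Remark \ref{remark}(i)) if and only if $\mathrm{supp}(I)\supseteq\mathrm{supp}(J)$; consequently two vertices are adjacent in $\Gamma_{reg}(R)$ exactly when their supports are comparable. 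Moreover every proper nonempty $A\subseteq\{1,\dots,n\}$ is realized, since $\mathrm{supp}\!\big(\bigcap_{j\notin A}\mathfrak p_j\big)=A$, so the support map is onto the proper nonempty subsets of $\{1,\dots,n\}$.

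Finally I would invoke $\omega(\Gamma_{reg}(R))<\infty$. By the lemma any two ideals with equal support are adjacent, so if a single support class were infinite it would be an infinite clique; hence every support class is finite. As there are at most $2^n-2$ support values, $R$ has only finitely many ideals and is therefore Artinian; being reduced it is a product of fields, so $R\cong F_1\times\cdots\times F_n\cong T(R)$ and in particular $\Gamma_{reg}(R)\cong\Gamma_{reg}(T(R))$. The invariants of the second step then transfer verbatim, yielding (i) and (ii); the lower bound $\omega\ge n-1$ is moreover visible directly, since along a maximal chain $A_1\subsetneq\cdots\subsetneq A_{n-1}$ of proper nonempty subsets the ideals $\bigcap_{j\notin A_k}\mathfrak p_j$ form a clique of size $n-1$. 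The main obstacle is the support lemma — above all the prime-avoidance construction of an element of $I$ whose support is all of $\mathrm{supp}(I)$ — because it is exactly this that makes equal-support ideals adjacent and thereby converts the finiteness bound $\omega<\infty$ into the structural conclusion that $R$ is a product of $n$ fields.
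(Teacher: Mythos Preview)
Your argument is correct and reaches the same endpoint as the paper --- namely that $R\cong T(R)\cong F_1\times\cdots\times F_n$, after which the Artinian results of Sections 2 and 3 finish the job --- but you arrive there by a genuinely different route. The paper extracts from $\omega(\Gamma_{reg}(R))<\infty$ the single fact that every regular element of $R$ is a unit: if $x$ were a non-zero-divisor and a non-unit, then $\{(x^m)\}_{m\ge 1}$ would be an infinite family of distinct non-trivial ideals, pairwise adjacent because each $x^m$ is $R$-regular, giving an infinite clique. Once every regular element is a unit, $S=R\setminus Z(R)$ consists of units and the localization map $I\mapsto I_S$ is a bijection on non-trivial ideals that preserves and reflects arcs, so $\Gamma_{reg}(R)\cong\Gamma_{reg}(T(R))$ directly. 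Your approach instead builds the support function, proves via prime avoidance that ideals with equal support are mutually adjacent, and then uses $\omega<\infty$ to force each support fibre (and hence $\mathbb I(R)^*$) to be finite, whence $R$ is Artinian and reduced. The paper's trick is shorter and needs no combinatorics; your route costs more but yields the structural dividend that adjacency in $\Gamma_{reg}(R)$ is exactly comparability of supports, which is a clean description of the graph in its own right (and in particular makes the clique of size $n-1$ along a chain of supports visible without ever invoking $T(R)$). Note, incidentally, that for your finiteness step you only need the easy implication $\mathrm{supp}(I)\supseteq\mathrm{supp}(J)\Rightarrow I$ contains a $J$-regular element; the converse (which you also assert) requires an extra construction but is not actually used in your deduction that $R$ is Artinian.
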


\begin{proof}
{Assume that $\omega(\Gamma_{reg}(R))<\infty$. First we show that every
element of $R$ is an either zero-divisor or unit. By contrary,
suppose that $x\in R$ is neither zero-divisor nor unit. Then it is
not hard to check that $\{(x^n)\}_{n\geq 1}$ is an infinite clique
of $\Gamma_{reg}(R)$, a contradiction. Suppose that
${\rm{Min}}(R)=\{{\mathfrak{p}}_1,\ldots,{\mathfrak{p}}_n\}$, for
some positive integer $n$. If $S=R\setminus
\bigcup_{i=1}^n{\mathfrak{p}}_i$, then \cite [Corollary 2.4]{huc}
implies that $T(R)=R_S$. So by Proposition \ref{matlis}, we have
$T(R)\cong R_{{\mathfrak{p}}_1}\times\cdots\times
R_{{\mathfrak{p}}_n}$. Since $R$ is reduced, by \cite[Proposition
1.1]{matlis}, Part $(1)$, every $R_{{\mathfrak{p}}_i}$ is a field.
 We claim that if $I$ and $J$ are
two distinct vertices of $\overrightarrow{\Gamma_{reg}}(R)$, then $I\longrightarrow J$ is an arc in $\overrightarrow{\Gamma_{reg}}(R)$ if and only if $I_S\longrightarrow J_S$ is an arc in $\overrightarrow{\Gamma_{reg}}(R_S)$. First suppose that $I$ and $J$ are two distinct
non-trivial ideals of $R$ and there is an arc from $I$ to $J$ in
$\overrightarrow{\Gamma_{reg}}(R)$. Since $S$ contains no zero-divisor, we deduce that $I_S$
and $J_S$ are two non-trivial ideals of $R_S$. We show that $I_S\neq J_S$. Suppose to the contrary, $I_S=J_S$. Then
for every $x\in I$, there exists an element $t\in S$
such that $tx\in J$. Since every element in $S$ is a unit,
we deduce that $x\in J$. So $I\subseteq J$. Similarly, one can show that
$J\subseteq I$. Thus $I=J$, a contradiction. Therefore, $I_S\neq
J_S$. Now, let $x\in I$ be a $J$-regular element. Then one can
easily show that $\frac{x}{1}\in I_S$ is a $J_S$-regular element
and so there is an arc from
$I_S$ to $J_S$ in $\overrightarrow{\Gamma_{reg}}(R_S)$. Conversely, let $\frac{x}{s}\in I_S$
be a $J_S$-regular element. Then we show that $x\in I$ is a
$J$-regular element. Suppose to the contrary, $xy=0$, for some $0\neq y\in J$. Then we deduce that $\frac{x}{s}.\frac{y}{1}=0$, a
contradiction. So the claim is proved. Therefore, the graphs $\Gamma_{reg}(R)$ and $\Gamma_{reg}(T(R))$ are isomorphic. Now, since $T(R)$ is the direct product of $n$ fields, (i) follows from Proposition \ref{omegaformula}. Next, we prove (ii). By Theorem \ref{edgechromaticnumber}, we have
$\chi'(\Gamma_{reg}(T(R)))=\Delta(\Gamma_{reg}(T(R)))$. Note that if $n=2$, then $T(R)$ is a direct product of two fields and hence $\Gamma_{reg}(T(R))$ contains no edge. As we saw in the proof of Theorem \ref{edgechromaticnumber}, $\Delta(\Gamma_{reg}(T(R)))=2^{n-1}-2$, for every $n\geq 3$.
Therefore, $\chi'(\Gamma_{reg}(R))=2^{n-1}-2$, and the proof is complete.
}
\end{proof}

The
following corollary is an immediate consequence of Theorem
\ref{bII2.11}.

\begin{cor}\label{alphaomega}
Let $R$ be a reduced ring with finitely many minimal prime ideals such that $\omega(\Gamma_{reg}(R))<\infty$. Then
$$|{\rm{Min}}(R)|=|{\rm Max}(T(R))|=\chi(\Gamma_{reg}(T(R)))+1=\omega(\Gamma_{reg}(R))+1.$$
\end{cor}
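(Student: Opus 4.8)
The plan is to identify all four quantities in the display with the single integer $n := |{\rm Min}(R)|$, since almost everything needed has already been produced inside the proof of Theorem \ref{bII2.11}. Write ${\rm Min}(R) = \{\mathfrak{p}_1, \ldots, \mathfrak{p}_n\}$ and $S = R \setminus \bigcup_{i=1}^{n} \mathfrak{p}_i$.

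First I would reuse, essentially verbatim, the opening of the proof of Theorem \ref{bII2.11}: the hypothesis $\omega(\Gamma_{reg}(R)) < \infty$ forces every element of $R$ to be a unit or a zero-divisor, whence $T(R) = R_S$ and, by Proposition \ref{matlis} together with \cite[Proposition 1.1]{matlis}, $T(R) \cong R_{\mathfrak{p}_1} \times \cdots \times R_{\mathfrak{p}_n}$ with each $R_{\mathfrak{p}_i}$ a field. Because the maximal ideals of a finite product of $n$ fields are precisely the $n$ coordinate maximal ideals, this yields at once $|{\rm Max}(T(R))| = n = |{\rm Min}(R)|$ --- the one equality in the corollary that is not literally part of the statement of Theorem \ref{bII2.11}.

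Next I would invoke Theorem \ref{bII2.11}(i), which for $n \geq 3$ gives $\omega(\Gamma_{reg}(R)) = \chi(\Gamma_{reg}(T(R))) = n - 1$; adding $1$ turns the two remaining expressions $\chi(\Gamma_{reg}(T(R))) + 1$ and $\omega(\Gamma_{reg}(R)) + 1$ into $n$, so the entire chain collapses to $n$. In this regime the corollary really is immediate, every ingredient being imported wholesale from the theorem.

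The hard part --- indeed the only point requiring independent attention --- is that the corollary is asserted for arbitrary finite $|{\rm Min}(R)|$, whereas Theorem \ref{bII2.11} is proved only for $n \geq 3$; so I would settle the small cases separately. The value $n = 1$ cannot occur, since in a reduced ring a single minimal prime equals the nilradical $(0)$ and would make $R$ a domain, contrary to the standing assumption that $R$ is a non-domain. For $n = 2$ one has $T(R) \cong F_1 \times F_2$, whose only non-trivial ideals $F_1 \times (0)$ and $(0) \times F_2$ are non-adjacent by Remark \ref{remark} (at the coordinate where one ideal fills the whole field, no regular element of the other can exist); thus $\Gamma_{reg}(T(R))$ consists of two isolated vertices, giving $\omega(\Gamma_{reg}(T(R))) = \chi(\Gamma_{reg}(T(R))) = 1 = n - 1$ --- the same conclusion also follows by applying the earlier Artinian corollary, parts (i) and (ii), to the reduced Artinian ring $T(R)$. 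Finally, since the localization isomorphism $I \mapsto I_S$ constructed in the proof of Theorem \ref{bII2.11} is valid for every $n$, we have $\Gamma_{reg}(R) \cong \Gamma_{reg}(T(R))$, so $\omega(\Gamma_{reg}(R)) = 1$ as well, and in every admissible case the four quantities coincide with $n$, which completes the proof.
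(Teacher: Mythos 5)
Your proof is correct, and it follows the route the paper intends: the paper offers no separate argument at all, simply declaring the corollary ``an immediate consequence of Theorem \ref{bII2.11}.'' Your write-up is actually more careful than the paper's on two points, and both are worth recording. First, you rightly note that the equality $|{\rm Min}(R)|=|{\rm Max}(T(R))|$ is not part of the \emph{statement} of Theorem \ref{bII2.11}; it rests on facts established inside its proof, namely that $\omega(\Gamma_{reg}(R))<\infty$ forces every element of $R$ to be a unit or a zero-divisor, whence $T(R)=R_S\cong R_{\mathfrak{p}_1}\times\cdots\times R_{\mathfrak{p}_n}$ with each factor a field, and that this portion of the argument never uses $n\geq 3$. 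Second, and more importantly, Theorem \ref{bII2.11} is hypothesized only for $n\geq 3$, while the corollary allows any finite number of minimal primes; your separate treatment of the small cases closes a gap that the paper's ``immediate'' glosses over: $n=1$ is excluded by the standing assumption that $R$ is not a domain, and for $n=2$ one has $\Gamma_{reg}(T(R))\cong\overline{K_2}$ (the two non-trivial ideals $F_1\times(0)$ and $(0)\times F_2$ are non-adjacent by Remark \ref{remark}), so $\omega=\chi=1=n-1$, and since the localization isomorphism $\Gamma_{reg}(R)\cong\Gamma_{reg}(T(R))$ from the theorem's proof is valid for every finite $n$, all four quantities again equal $n$. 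So your proposal is not a different method, but a correct and usefully more complete version of the paper's argument.
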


Finally, in the remaining of this paper, we see that the finiteness of the clique number and vertex chromatic number of the regular graph of ideals of $R$ depends on those of localizations of $R$ at maximal ideals. Before this, we need to recall the following lemma from \cite{disc.math}.

\begin{lem}{\rm(See \cite[Lemma 9]{disc.math})}\label{1212}
Let $R$ be a ring, $I$ and $J$ be two non-trivial ideals of $R$. If
for every $\mathfrak{m}\in {\rm{Max}}(R)$,
$I_\mathfrak{m}=J_\mathfrak{m}$, then $I=J$.
\end{lem}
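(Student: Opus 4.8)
The plan is to use the standard local--global principle via colon (conductor) ideals. By symmetry it suffices to prove one inclusion, say $I\subseteq J$; the reverse inclusion $J\subseteq I$ then follows by interchanging the roles of $I$ and $J$, and the two inclusions together give $I=J$. So I would fix an arbitrary element $x\in I$ and aim to show $x\in J$.

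The key device is the colon ideal $(J:x)=\{r\in R\mid rx\in J\}$, which is an ideal of $R$. Observe that $x\in J$ is equivalent to $1\in(J:x)$, which is equivalent to $(J:x)=R$. So the whole problem reduces to showing that $(J:x)$ is not a proper ideal. I would argue by contradiction: if $(J:x)\neq R$, then it is contained in some maximal ideal $\mathfrak{m}\in{\rm Max}(R)$. Now I bring in the hypothesis by localizing at $\mathfrak{m}$. Since $x\in I$, its image satisfies $\frac{x}{1}\in I_\mathfrak{m}=J_\mathfrak{m}$, so $\frac{x}{1}=\frac{a}{s}$ for some $a\in J$ and some $s\in R\setminus\mathfrak{m}$.

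The crucial step is to translate this equality in the localization back into an honest relation in $R$. By the definition of equality of fractions, there exists $t\in R\setminus\mathfrak{m}$ with $t(sx-a)=0$ in $R$, hence $tsx=ta\in J$ because $a\in J$. This shows $ts\in(J:x)$. But $\mathfrak{m}$ is prime and $s,t\notin\mathfrak{m}$ force $ts\notin\mathfrak{m}$, contradicting $(J:x)\subseteq\mathfrak{m}$. Therefore $(J:x)=R$, so $x\in J$, establishing $I\subseteq J$.

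The step I expect to require the most care is precisely this passage from $\frac{x}{1}=\frac{a}{s}$ in $J_\mathfrak{m}$ to an element of $(J:x)$ lying outside $\mathfrak{m}$: one must remember that equality in a localization only forces $sx-a$ to be annihilated by some $t$ away from $\mathfrak{m}$ (not to vanish outright), and then use that the product $ts$ of two elements outside the \emph{prime} ideal $\mathfrak{m}$ is again outside $\mathfrak{m}$. Everything else is routine, and the hypothesis $I,J\in\mathbb{I}(R)^*$ is used only to guarantee that we are comparing genuine (non-trivial) ideals; the argument itself works for arbitrary ideals.
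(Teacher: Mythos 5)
Your proof is correct and complete: the colon-ideal argument (localize, contradict $(J:x)\subseteq\mathfrak{m}$ via an element $ts\notin\mathfrak{m}$ with $tsx\in J$) is the standard local--global proof of this fact, and your remark that the non-triviality of $I$ and $J$ is never actually used is also accurate. Note that the paper itself offers no proof to compare against---the lemma is quoted verbatim from Lemma 9 of \cite{disc.math}---so your self-contained argument simply supplies what the paper leaves as a citation.
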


\begin{remark}\label{hom}
Let $I$ and $J$ be two distinct non-trivial ideals of $R$ such that $I\longrightarrow J$ is an arc of $\overrightarrow{\Gamma_{reg}}(R)$. Then from \cite[Proposition 1.2.3]{herzog}, we deduce that ${\rm Hom}_R(\frac{R}{I},J)=0$. Moreover, if $R$ is a Noetherian ring, then ${\rm Hom}(\frac{R}{I},J)=0$ implies that $I\longrightarrow J$ is an arc of $\overrightarrow{\Gamma_{reg}}(R)$.
\end{remark}

\begin{thm}
Let $R$ be a Noetherian ring with finitely many maximal ideals. If for every
$\mathfrak{m}\in {\rm{Max}}(R)$,
$\omega(\Gamma_{reg}(R_\mathfrak{m}))$ is finite, then
$\omega(\Gamma_{reg}(R))$ is finite.
\end{thm}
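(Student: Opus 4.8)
The plan is to transfer the question to the localizations $R_\mathfrak{m}$, using Remark \ref{hom} to see that arcs descend under localization and Lemma \ref{1212} to recover global information from the local pieces. Write ${\rm Max}(R)=\{\mathfrak{m}_1,\ldots,\mathfrak{m}_k\}$ and let $C$ be an arbitrary clique of $\Gamma_{reg}(R)$; the goal is to bound $|C|$ by a quantity that does not depend on $C$.

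First I would establish the key local statement: if $I,J$ are distinct non-trivial ideals of $R$ with an arc $I\longrightarrow J$ in $\overrightarrow{\Gamma_{reg}}(R)$, and if $I_\mathfrak{m}$ and $J_\mathfrak{m}$ are distinct non-trivial ideals of $R_\mathfrak{m}$, then $I_\mathfrak{m}\longrightarrow J_\mathfrak{m}$ is an arc of $\overrightarrow{\Gamma_{reg}}(R_\mathfrak{m})$. Indeed, by Remark \ref{hom} the arc $I\longrightarrow J$ gives ${\rm Hom}_R(R/I,J)=0$. Since $R$ is Noetherian, $R/I$ is finitely presented, so ${\rm Hom}$ commutes with localization, whence ${\rm Hom}_{R_\mathfrak{m}}(R_\mathfrak{m}/I_\mathfrak{m},J_\mathfrak{m})\cong {\rm Hom}_R(R/I,J)_\mathfrak{m}=0$. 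As $R_\mathfrak{m}$ is again Noetherian, the converse half of Remark \ref{hom}, applied over $R_\mathfrak{m}$, produces the arc $I_\mathfrak{m}\longrightarrow J_\mathfrak{m}$.

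Next I would bound the number of values that each localization coordinate can take on $C$. Fix $\mathfrak{m}$ and look at the distinct non-trivial ideals occurring among $\{I_\mathfrak{m}\mid I\in C\}$. Given two such distinct values $I_\mathfrak{m}\neq J_\mathfrak{m}$, the ideals $I,J$ are distinct and hence adjacent in $\Gamma_{reg}(R)$; by the local statement the localizations are then adjacent in $\Gamma_{reg}(R_\mathfrak{m})$. So these distinct non-trivial values form a clique of $\Gamma_{reg}(R_\mathfrak{m})$, and there are at most $\omega(\Gamma_{reg}(R_\mathfrak{m}))$ of them; adjoining the two trivial possibilities $(0)$ and $R_\mathfrak{m}$, the coordinate map $I\mapsto I_\mathfrak{m}$ attains at most $\omega(\Gamma_{reg}(R_\mathfrak{m}))+2$ values on $C$.

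Finally, Lemma \ref{1212} shows that $I\mapsto (I_{\mathfrak{m}_1},\ldots,I_{\mathfrak{m}_k})$ is injective on non-trivial ideals of $R$, so combining the per-coordinate bounds yields $|C|\leq \prod_{i=1}^{k}(\omega(\Gamma_{reg}(R_{\mathfrak{m}_i}))+2)$, a finite number by hypothesis. As this bound is uniform over all cliques $C$ (in particular no infinite clique can exist, since its image under an injection into a finite set would be finite), we conclude $\omega(\Gamma_{reg}(R))<\infty$. I expect the main obstacle to be the local statement of the second paragraph, namely justifying that arcs descend under localization: one must invoke the ${\rm Hom}$–localization isomorphism that the Noetherian hypothesis supplies and carefully isolate the degenerate cases where $I_\mathfrak{m}$ or $J_\mathfrak{m}$ collapses to $(0)$ or $R_\mathfrak{m}$, which are precisely what the ``$+2$'' in the bound absorbs.
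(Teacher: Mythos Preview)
Your argument is correct and uses the same three ingredients as the paper: Remark~\ref{hom} to pass from arcs to vanishing of ${\rm Hom}$, the Noetherian commutation of ${\rm Hom}$ with localization, and Lemma~\ref{1212} to glue the local information back together. The packaging, however, differs. The paper argues by contradiction: starting from an infinite clique $\{J_i\}$, it applies the pigeonhole principle at $\mathfrak{m}_1$ to extract an infinite subset on which $(J_i)_{\mathfrak{m}_1}$ is constant, then iterates through $\mathfrak{m}_2,\ldots,\mathfrak{m}_n$, and finally invokes Lemma~\ref{1212} to reach a contradiction. You instead give a direct counting argument, bounding the number of values each coordinate map $I\mapsto I_{\mathfrak{m}}$ can take on a clique and then using injectivity of the product map. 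Your route yields the explicit bound $\omega(\Gamma_{reg}(R))\leq\prod_{i}(\omega(\Gamma_{reg}(R_{\mathfrak{m}_i}))+2)$, which the paper's pigeonhole argument does not produce; indeed, your bound is the $\omega$-analogue of the inequality the paper proves for $\chi$ in the very next theorem. Your treatment of the degenerate localizations (the ``$+2$'') is also more explicit than the paper's, which handles those cases implicitly inside the pigeonhole step.
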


\begin{proof}
{Let
${\rm{Max}}(R)=\{\mathfrak{m}_1,\ldots,\mathfrak{m}_n\}$. Suppose to the contrary, $C=\{J_i\}_{i=1}^{\infty}$ is an infinite clique of
$\Gamma_{reg}(R)$. Then by Remark \ref{hom}, for every $i$ and $j$ with $i\neq j$, either ${\rm Hom}_R(\frac{R}{J_i},J_j)=0$ or ${\rm Hom}_R(\frac{R}{J_j},J_i)=0$. Thus from \cite[Lemma 4.87]{rotman}, we obtain that ${\rm Hom}_{R_{\mathfrak{m}_1}}(\frac{R_{\mathfrak{m}_1}}{(J_i)_{\mathfrak{m}_1}},(J_j)_{\mathfrak{m}_1})=0$ or ${\rm Hom}_{R_{\mathfrak{m}_1}}(\frac{R_{\mathfrak{m}_1}}{(J_j)_{\mathfrak{m}_1}},(J_i)_{\mathfrak{m}_1})=0$, for every $i$ and $j$ with $i\neq j$. Since $\omega(\Gamma_{reg}(R_{\mathfrak{m}_1}))<\infty$, we deduce that
there exists an infinite subset $A_1\subseteq \mathbb{N}$ such that
for every $i,j\in A_1$,
${(J_i)}_{\mathfrak{m}_1}={(J_j)}_{\mathfrak{m}_1}$. Now, using
$\omega(\Gamma_{reg}(R_{\mathfrak{m}_2}))<\infty$, we conclude that
there exists an infinite subset $A_2\subseteq A_1$ such that for
every $i,j\in A_2$,
${(J_i)}_{\mathfrak{m}_2}={(J_j)}_{\mathfrak{m}_2}$. By continuing
this procedure one can see that there exists an infinite subset $A_n
\subseteq A_{n-1}$ such that for every $i,j\in A_n$,
${(J_i)}_{\mathfrak{m}_l}={(J_j)}_{\mathfrak{m}_l}$, for every $l$,
$l=1,\ldots, n$. Therefore, by Lemma \ref{1212}, we get a contradiction. }
\end{proof}

\begin{thm}
Let $R$ be a ring with finitely many maximal ideals. If for every
$\mathfrak{m}\in {\rm{Max}}(R)$, $\chi(\Gamma_{reg}(R_\mathfrak{m}))$
is finite, then $\chi(\Gamma_{reg}(R))$ is finite and moreover,  $$\chi(\Gamma_{reg}(R)\leq\prod_{\mathfrak{m}\in{\rm Max}(R)} (\chi(\Gamma_{reg}(R_{\mathfrak{m}})+2)-2.$$
\end{thm}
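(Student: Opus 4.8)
The plan is to exhibit an explicit proper vertex coloring of $\Gamma_{reg}(R)$ whose palette has size at most $\prod_{i}(\chi(\Gamma_{reg}(R_{\mathfrak{m}_i}))+2)-2$, where ${\rm Max}(R)=\{\mathfrak{m}_1,\ldots,\mathfrak{m}_n\}$. Write $\chi_i=\chi(\Gamma_{reg}(R_{\mathfrak{m}_i}))$ and fix, for each $i$, a proper coloring $\phi_i$ of $\Gamma_{reg}(R_{\mathfrak{m}_i})$ with color set $\{1,\ldots,\chi_i\}$. For a vertex $I$ of $\Gamma_{reg}(R)$ and each index $i$, the localization $I_{\mathfrak{m}_i}$ is an ideal of $R_{\mathfrak{m}_i}$ that may be trivial, so I would record a coordinate $c_i(I)$ equal to the symbol $\star_0$ if $I_{\mathfrak{m}_i}=(0)$, the symbol $\star_R$ if $I_{\mathfrak{m}_i}=R_{\mathfrak{m}_i}$, and $\phi_i(I_{\mathfrak{m}_i})$ otherwise (that is, when $I_{\mathfrak{m}_i}$ is a genuine vertex of $\Gamma_{reg}(R_{\mathfrak{m}_i})$). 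The coloring is then $c(I)=(c_1(I),\ldots,c_n(I))$, each coordinate ranging over a set of $\chi_i+2$ values.

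Next I would bound the palette. There are at most $\prod_{i}(\chi_i+2)$ possible tuples, and the two constant tuples $(\star_0,\ldots,\star_0)$ and $(\star_R,\ldots,\star_R)$ are never attained. Indeed, if $I_{\mathfrak{m}_i}=(0)$ for every $i$ then $I=(0)$, since a module vanishes precisely when all its localizations at maximal ideals vanish; likewise $I_{\mathfrak{m}_i}=R_{\mathfrak{m}_i}$ for every $i$ forces $(R/I)_{\mathfrak{m}_i}=0$ for all $i$, hence $R/I=0$ and $I=R$. As neither $(0)$ nor $R$ is a vertex of $\Gamma_{reg}(R)$, at most $\prod_{i}(\chi_i+2)-2$ colors are used, which is exactly the asserted bound once properness is verified.

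The heart of the argument is properness, and the key observation is that regularity localizes: if $x\in I$ is a $J$-regular element of $R$, then $\frac{x}{1}$ is a $J_{\mathfrak{m}_i}$-regular element of $I_{\mathfrak{m}_i}$ for every $i$. This is the same elementary computation carried out, for the total ring of fractions, in the proof of Theorem \ref{bII2.11}: if $\frac{x}{1}\cdot\frac{y}{s}=0$ in $J_{\mathfrak{m}_i}$ with $y\in J$, then $txy=0$ in $R$ for some $t\notin\mathfrak{m}_i$, and since $x$ is $J$-regular and $ty\in J$ we get $ty=0$, whence $\frac{y}{s}=0$. Now suppose, for contradiction, that $I$ and $J$ are adjacent vertices with $c(I)=c(J)$; without loss of generality there is an arc $I\to J$ in $\overrightarrow{\Gamma_{reg}}(R)$ witnessed by some $J$-regular $x\in I$. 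As adjacent vertices are distinct, the contrapositive of Lemma \ref{1212} supplies an index $i^{*}$ with $I_{\mathfrak{m}_{i^{*}}}\neq J_{\mathfrak{m}_{i^{*}}}$. At this coordinate $c_{i^{*}}(I)=c_{i^{*}}(J)$ cannot be $\star_0$ or $\star_R$, since either symbol would force the two localizations to coincide; hence it is a genuine color, and $I_{\mathfrak{m}_{i^{*}}},J_{\mathfrak{m}_{i^{*}}}$ are distinct vertices of $\Gamma_{reg}(R_{\mathfrak{m}_{i^{*}}})$ sharing a $\phi_{i^{*}}$-color, so they are nonadjacent there. But $\frac{x}{1}\in I_{\mathfrak{m}_{i^{*}}}$ is $J_{\mathfrak{m}_{i^{*}}}$-regular, so $I_{\mathfrak{m}_{i^{*}}}\to J_{\mathfrak{m}_{i^{*}}}$ is an arc and the two are adjacent in $\Gamma_{reg}(R_{\mathfrak{m}_{i^{*}}})$, a contradiction. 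Thus $c$ is proper and $\chi(\Gamma_{reg}(R))\le\prod_{i}(\chi_i+2)-2<\infty$.

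I expect the main obstacle to be precisely this localization-of-regularity step combined with the bookkeeping at coordinates where $I_{\mathfrak{m}_i}$ or $J_{\mathfrak{m}_i}$ degenerates to $(0)$ or $R_{\mathfrak{m}_i}$: the two extra symbols are exactly what let a matching color at such a coordinate certify \emph{equality} of the localizations rather than mere nonadjacency, which in turn forces the discrepancy guaranteed by Lemma \ref{1212} to land on a genuinely colored coordinate where a contradiction can be extracted. It is worth noting that, in contrast to the preceding theorem, no Noetherian hypothesis is required here, because working directly with regular elements avoids any appeal to the localization of $\mathrm{Hom}$.
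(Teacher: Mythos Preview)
Your proof is correct and follows exactly the same approach as the paper: both define the coloring $c(I)=(g_1(I_{\mathfrak{m}_1}),\ldots,g_n(I_{\mathfrak{m}_n}))$ where each coordinate uses the local coloring together with two extra symbols for the degenerate cases $I_{\mathfrak{m}_i}=(0)$ and $I_{\mathfrak{m}_i}=R_{\mathfrak{m}_i}$, and both invoke Lemma~\ref{1212} to verify properness. You have simply written out in full the details the paper compresses into ``it is not hard to check,'' including the explicit justification for the $-2$ in the bound.
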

\begin{proof}
{Let ${\rm{Max}}(R)=\{\mathfrak{m}_1,\ldots,\mathfrak{m}_n\}$ and
$f_i:\,V(\Gamma_{reg}(R_{\mathfrak{m}_i}))\longrightarrow \{1,\ldots,
\chi(\Gamma_{reg}(R_{\mathfrak{m}_i}))\}$ be a proper vertex coloring
of $\Gamma_{reg}(R_{\mathfrak{m}_i})$, for every $i$, $1 \leq i \leq
n$.  We define a function $f$ on
$\mathbb{I}(R)\setminus\{R\}$ by
$f(I)=(g_1(I_{\mathfrak{m}_1}),\ldots,g_n(I_{\mathfrak{m}_n}))$,
where
$$g_i(I_{\mathfrak{m}_i})=
  \begin{cases}
    0;      & I_{\mathfrak{m}_i}=(0)\,\,\, \\
    -1;       & I_{\mathfrak{m}_i}=R_{\mathfrak{m}_i}\,\,\, \,\,
 \\
    f_i(I_{\mathfrak{m}_i});       & {\rm otherwise}.

\end{cases}$$

\noindent Using Lemma \ref{1212}, it is not hard to check that $f$ is a proper vertex coloring of
$\Gamma_{reg}(R)$ and this completes the proof.}
\end{proof}


{}
~~~~~~~~~~~~~~~~~~~~~~~~~~~~~~~~~~~~~~~~~~~~~~~~~~~~~~~~~~~~~~~~~~~~~~~~~~~~~~~~~~~~~~~~~~~~~~~~~~~~~~~~~~~~~~~~~~~~~~~~~~~~~~~~~~~~~~~~~~~~~~~~~~~~~~~
~~~~~~~~~~~~~~~~~~~~~~~~~~~~~~~~~~~~~~~~~~~~~~~~~~~~~~~~~~~~~~~~~~~~~~~~~~~~~~~~~~~~~~~~~~~~~~~~~~~~~~~~~~~~~~~~~~~~~~~~~~~~~~~~~~~~~~~~~~~~~~~~~~~~~~~~
~~~~~~~~~~~~~~~~~~~~~~~~~~~~~~~~~~~~~~~~~~~~~~~~~~~~~~~~~~~~~~~~~~~~~~~~~~~~~~~~~~~~~~~~~~~~~~~~~~~~~~~~~~~~~~~~~~~~~~~~~~~~~~~~~~~~~~~~~~~~~~~~~~~~~~~~

\end{document}